\newtheorem{thm}{\protect\theoremname}
\theoremstyle{definition}
\newtheorem{defn}[thm]{Definition}
\theoremstyle{plain}
\newtheorem{prop}[thm]{Proposition}
\theoremstyle{plain}
\newtheorem{cor}[thm]{Corollary}
\theoremstyle{plain}
\newtheorem{lem}[thm]{Lemma}
\theoremstyle{remark}
\theoremstyle{remark}
\newtheorem{hypothesis}{Hypothesis}
\newcommand{\Tr}{\mathrm{Tr}}
\renewcommand{\Re}{\mathrm{Re}}
\renewcommand{\Im}{\mathrm{Im}}
\begin{document}
	
\title{Generalized splay states in phase oscillator networks} 

\author{Rico Berner}
\email[]{rico.berner@physik.tu-berlin.de}
\affiliation{Institute of Theoretical Physics, Technische Universit\"at Berlin, Hardenbergstr. 36, 10623 Berlin, Germany}
\affiliation{Institute of Mathematics, Technische Universit\"at Berlin, Strasse des 17. Juni 136, 10623 Berlin, Germany.}
\author{Serhiy Yanchuk}
\affiliation{Institute of Mathematics, Technische Universit\"at Berlin, Strasse des 17. Juni 136, 10623 Berlin, Germany.}
\author{Yuri Maistrenko}
\affiliation{Forschungszentrum Jülich GmbH, Wilhelm-Johnen-Straße, 52428 Jülich, Germany}
\affiliation{Institute of Mathematics and Centre for Medical and Biotechnical Research, NAS of Ukraine, Tereshchenkivska St. 3, 01601 Kyiv, Ukraine.}
\author{Eckehard Sch\"oll}
\affiliation{Institute of Theoretical Physics, Technische Universit\"at Berlin, Hardenbergstr. 36, 10623 Berlin, Germany}
\affiliation{Bernstein Center for Computational Neuroscience Berlin, Humboldt Universit\"at, Philippstraße 13, 10115 Berlin, Germany}
\affiliation{Potsdam Institute for Climate Impact Research, Telegrafenberg A 31, 14473 Potsdam, Germany}

\date{\today}

\begin{abstract}
	Networks of coupled phase oscillators play an important role in the analysis of emergent collective phenomena. In this article, we introduce generalized $m$-splay states constituting a special subclass of phase-locked states with vanishing $m$th order parameter. Such states typically manifest incoherent dynamics, and they often create high-dimensional families of solutions (splay manifolds). For a general class of phase oscillator networks, we provide explicit linear stability conditions for splay states and exemplify our results with the well-known Kuramoto-Sakaguchi model. Importantly, our stability conditions are expressed in terms of just a few observables such as the order parameter or the trace of the Jacobian. As a result, these conditions are simple and applicable to networks of arbitrary size. We generalize our findings to phase oscillators with inertia and adaptively coupled phase oscillator models. 
\end{abstract}

\pacs{}

\maketitle 

\begin{quotation}
	Models of coupled phase oscillators are well-known paradigmatic systems to understand the mechanism behind the emergence of collective phenomena in complex networks. Due to the relative simplicity of these models, powerful methods such as the Watanabe-Strogatz theory or the Ott-Antonsen approach have been developed to describe certain dynamical states. Nowadays, a plethora of generalizations of phase oscillator models are developed to study biological, technological or socio-economic systems. Of particular interest are phase models with inertia for mechanical rotors in power grids or models with an adaptive network structure to model synaptic plasticity mechanisms in neuronal systems. This paper provides a systematic study of generalized splay states as a particular class of incoherent phase-locked solutions playing an important role in shaping the global dynamics of coupled oscillator systems.
In particular, we describe when a continuum of splay states emerges and a part of it (also continuum) becomes stable. These splay states are a manifestation of individual variability as one of the inherent properties of oscillatory networks.  
\end{quotation}
\section{Introduction}\label{sec:intro}
Dynamical networks of phase oscillators are a well-known paradigm for studying the collective behavior of interacting agents~\cite{ACE05,PIK01}. 
The importance of such network models relies, in particular, on the fact that any system of weakly interacting nonlinear oscillators can be generally reduced to a phase oscillator network~\cite{WIN80,HOP97,PIK01,PIE19a}.
Extensive reviews have highlighted the importance of phase oscillator models and reduction techniques~\cite{PIK01,ASH16}. Recent studies also aim at increasing the range of applicability of phase oscillators by generalizing the conditions under which reduction techniques are valid~\cite{KLI17a,ROS19a,ERM19}.

A famous representative of the class of phase oscillator models, the Kuramoto model, in which all oscillators are coupled in the ''all-to-all'' manner, has attracted much attention due to its simple form and mathematical tractability~\cite{KUR84,STR00}. 
The Kuramoto model and its extensions have gained additional popularity through applications to real-world problems~\cite{STR93,PIK01,STR03,ROD16}, including
neuroscience~\cite{BRE10h,LUE16,ASL18a,ROE19a,BER21b,BIC20} and power grids~\cite{FIL08a,TUM19,TAH19,HEL20,TOT20,BER21a}. 
Despite the simple structure, the Kuramoto model can exhibit many different dynamical regimes~\cite{MAI14a,OME19c,TEI19}, and sophisticated methods have been developed for their analysis. In particular, it was shown that sinusoidally and globally coupled phase oscillators are partially integrable. The Watanabe-Strogatz theory allows for a reduction to only three dimensions, which can also be applied to even more general classes of phase oscillator models~\cite{WAT93a,WAT94,MAR09c,STE11b}. A remarkable observation in the work of Watanabe and Strogatz is the role of incoherent states~\cite{STR91} in the foliation of the phase space. They have shown that sheets of the foliation can be parametrized by the family of incoherent states. In our work, we analyze these incoherent but phase-locked and frequency-synchronized states for a large class of phase oscillator models and shed new light on their dynamical properties. Moreover, we generalize the notion of incoherent states by introducing generalized splay states.

Another approach developed to understand coupled oscillators in the continuum limit is the Ott-Antonson ansatz~\cite{OTT08}. In the case of an infinite number of oscillators, the Ott-Antonson theory allows for a reduction to a two-dimensional dynamical system and has been successfully applied to describe the emergence of partially synchronized patterns~\cite{OME08,ABR08,OME13,OME18a,OME19c}. Remarkably, for both reduction techniques, the Watanabe-Strogatz and the Ott-Antonson theory, the reduced systems possess a clear physical interpretation, and both approaches are closely related~\cite{MAR09c,PIK15}. In fact, this direct relationship between the two approaches makes the study of incoherent states (generalized splay states) important for mean-field and other reduction techniques~\cite{GOT15,GON19a,SMI20a} as well as for the search for future generalizations~\cite{PIK08,MON15,TYU18,GOL21,RON21}. Moreover, splay states play a role in networks with non-global coupling. These state have also been found in non-locally coupled ring networks~\cite{WIL06,GIR12,BUR18}, and the concept of local splay states (or local incoherent states) has been introduced to relate them to the the incoherent states in globally coupled networks~\cite{BER20c}. Furthermore, splay and incoherent states have been discussed for more complex coupled systems such as Stuart-Landau oscillators~\cite{CHO09,ZOU09b}, systems with delay~\cite{CHO09,PER10c} and pulse-coupling~\cite{CAL09a,OLM14} where for the latter a link to the Watanabe-Strogatz theory has been proved~\cite{DIP12}. 

Various generalization have been proposed beyond the classical Kuramoto model. Starting from the generalization to complex networks~\cite{GOM07,DOE14}, the theory of phase oscillators has been further developed to study phenomena of phase transitions~\cite{PAZ05a,GOM11a,BOC16}, network symmetries~\cite{NIC13}, the impact of inertia~\cite{ERM91,FIL08a,MAI14a,OLM14a,JAR15,OLM15a,BEL16a,MAI17,JAR18,KRU20a,BRE21} and other forms of frequency adaptation~\cite{TAY10}, delayed coupling~\cite{YEU99a}, or the effect of time-dependent parameters~\cite{PET12}, to name just a few.

Another generalization which has gained much attention in recent years concerns the phenomena in networks of phase oscillators with adaptive coupling. Several models have been proposed and studied to gain insights into the interplay between collective dynamics and adaptivity~\cite{SEL02,MAI07,AOK12,LUE16,KAS17,BAC18b,BER19,ROE19a,FRA20,BER20,BER21b,VOC21}. Many of them were inspired by recent findings in neuroscience related to synaptic plasticity.

In this work, we provide a general analytic study of the local properties, existence and stability, of incoherent phase-locked states. For this, we introduce the class of phase oscillators models in Section~\ref{sec:model} and define the notion of generalized $m$-splay state. In Section~\ref{sec:1Cl_stab}, we describe manifolds of the splay states and provide explicit conditions for their linear stability. In the subsequent sections~\ref{sec:POwI} and~\ref{sec:POadap}, we generalize the results for the stability of $m$-splay states to phase oscillator models with inertia and adaptivity, respectively. In Section~\ref{sec:geometrical} we give a geometrical perspective. The results are discussed in Section~\ref{sec:conclusion}. To make the main text more accessible, we have moved some proofs to the Appendix.

\section{Coupled phase oscillator models and generalized splay states}\label{sec:model}

We consider systems of $N$ coupled phase oscillators
\begin{align}\label{eq:phaseoscModel_general}
\frac{\mathrm{d}}{\mathrm{d}t}\bm{\phi} & = \omega \bm{1} + F(\bm{\phi}),
\end{align}
 where $\bm{\phi}=(\phi_1,\dots,\phi_N)^T$, and each oscillator is represented by a dynamical variable $\phi_i(t)\in[0,2\pi)$, $i=1,\dots,N$. 
All oscillators possess the same common natural frequency $\omega$. Moreover,  $F=(f_1(\bm{\phi}),\dots,f_N(\bm{\phi}))^T$ is the coupling vector field with coupling functions $f_i$, and $\bm{1}=(1,\dots,1)^T$.

To measure the phase coherence, we define the $m$th moment of the complex mean field, $m \in \mathbb N$, as
\begin{align}
	Z_m(\bm{\phi}) = \frac{1}{N}\sum_{j=1}^{N}e^{\mathrm{i}m\phi_j} = R_m(\bm{\phi})e^{\mathrm{i}\rho_m(\bm{\phi})},
\end{align}
where $\mathrm{i}$ is the imaginary unit, $R_m$ denotes the $m$th moment of the (Kuramoto-Daido) order parameter, and $\rho_m$ is the collective phase of the $m$th moment of the mean field~\cite{KUR84,DAI94}.

\begin{defn}\label{def:phaseLockedStates}
	A solution of the phase oscillator system~\eqref{eq:phaseoscModel_general} is called  \textit{phase-locked state} if
	\begin{align}\label{eq:phaseLockedStates}
		\phi_i(t) = \Omega t + \vartheta_i,\quad i=1,\dots,N,
	\end{align}
	with collective frequency  $\Omega\in\mathbb{R}$ and fixed relative phases $\vartheta_i\in [0,2\pi)$ of the individual oscillators.
\end{defn}

Phase-locked states for oscillator models have been studied extensively in the past, see e.g. Refs.~\onlinecite{MAI07,BER19a}. In this paper, we restrict our attention to a special subclass of phase-locked states for which little is known about their role in the case of finite ensembles of oscillators, as follows.
\begin{defn}\label{def:generalizedSplayStates}
	A phase-locked state with $\phi_i(t) = \Omega t + \vartheta_i$ is an \textit{$m$-splay state} if it satisfies  
	\begin{equation}
		\label{eq:msplaycondition}
		R_m(\bm{\vartheta})=0.
	\end{equation}
	 We call $m\in\mathbb{N}$ the \textit{moment} of the splay state, and Eq.~(\ref{eq:msplaycondition}) the \textit{$m$-splay condition}.
\end{defn}

This definition generalizes the "classical" notion of splay states~\cite{CHO09} that are defined by phase distributions with equidistant phase relation $\vartheta_j = k j 2\pi/N$ with $k=0,\dots,N-1$ and form an  $m$-splay state if $(mk \mod N) \ne 0$. These states are also referred to as twisted states~\cite{WIL06,GIR12,OME14} or rotating waves~\cite{STE03,BUR18}, and are often related to certain network symmetries~\cite{ASH92b,ASH08,ASH16a}. In Fig.~\ref{fig:SplayExamples}, we illustrate $1$- and $2$-splay states for ensembles of $N=2,3$, and $4$ oscillators. The relation $R_m(\bm{\vartheta})=R_1(m\bm{\vartheta})$ holds between the splay states with different moments. In particular, we observe that any $2$-splay state in Fig.~\ref{fig:SplayExamples}(d)-(f) corresponds to a $1$-splay state in Fig.~\ref{fig:SplayExamples}(a)-(c) by doubling the relative angles between the oscillators.

\begin{figure}
	\centering
	\includegraphics{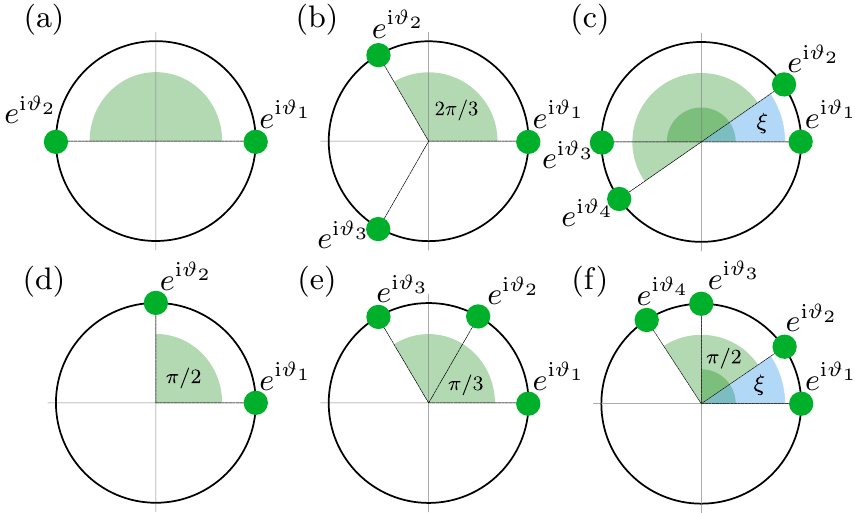}
	\caption{Illustrations of (a),(b),(c) $1$- and (d),(e),(f) $2$-splay states for $N=2$ (left column), $N=3$ (center column), and $N=4$ (right column) coupled phase oscillators represented on the unit circle. The green and blue angles depict fixed and parametrized (variable) phase relations, respectively.
		\label{fig:SplayExamples}
}
\end{figure}

With the definition \ref{def:generalizedSplayStates} of splay states, we may consider a whole family that fulfill the $m$-splay condition \eqref{eq:msplaycondition}.
\begin{defn}
	\label{def:splayStateFamily}
	The set
	\begin{align}\label{eq:splayStateFamily}
		SM_m = \left\{\bm{\vartheta}\in [0,2\pi)^N: R_m(\bm{\vartheta})=0\right\}
	\end{align} 
	is called the \textit{$m$-splay manifold}.
\end{defn}
The fact that $SM_m$ forms a $N-2$ dimensional manifold has been proven in Ref.~\onlinecite{ASH08}. Note that the splay manifold has a shift symmetry, i.e., if $\bm{\vartheta}\in SM_m$ then $\bm{\vartheta}+\psi\bm{1} \in SM_m$ for any $\psi\in [0,2\pi)$.

In this paper, we derive linear stability conditions for $m$-splay states for generic phase oscillator models \eqref{eq:phaseoscModel_general} that possess phase-shift symmetry and leave the $m$-splay manifold invariant, and each point of this manifold corresponds to an $m$-splay solution. 

More specifically, we assume that for some $m$, the following hypotheses are fulfilled:
\begin{hypothesis}\label{hyp:mSplayExists}
	For all $\bm{\vartheta}\in SM_m$,
	the system of coupled phase oscillators possesses $m$-splay states $\bm{\phi}(t)=\Omega t +\bm{\vartheta}$ with collective frequency $\Omega$.
\end{hypothesis}
\begin{hypothesis}\label{hyp:phaseinv}
For any $\psi\in \mathbb R$, the nonlinearity $F$ satisfies
$F(\bm{\phi}+\psi\bm{1}) = F(\bm{\phi})$. This implies that the corresponding system \eqref{eq:phaseoscModel_general} is equivariant with respect to the phase-shift transformation. 
\end{hypothesis}
With both Hypotheses~\ref{hyp:mSplayExists} and~\ref{hyp:phaseinv}, we guarantee that, first, all elements of the splay manifold $SM_m$ describe a phase-locked state and, second, the phase-locked states may be considered as time independent due to the phase-shift symmetry, i.e., we can consider the co-rotating reference frame $\bm{\phi}\to \bm{\phi}+\Omega t$. These restrictions are met by many phase oscillator models that have been analyzed over the last decades, including the Kuramoto-Sakaguchi model~\cite{SAK86}, models with higher mode coupling~\cite{DEL19} and with higher order interactions~\cite{SKA20}, models of coupled phase oscillators under resource constraints~\cite{KRO20b} and generalized phase oscillator models including systems with inertia~\cite{OLM14a,JAR18} or adaptive network structure~\cite{KAS17}. Some of them are discussed in the subsequent sections. An important class of systems that fulfill Hypothesis \ref{hyp:mSplayExists} are those that are coupled via mean-field \cite{PIK15,BIC20}. 

In the next section, we derive conditions for the linear stability of the $m$-splay manifold of system~\eqref{eq:phaseoscModel_general} under the Hypothesis'~\ref{hyp:mSplayExists} and~\ref{hyp:phaseinv}.

\section{Stability of generalized splay states in phase oscillator models}\label{sec:1Cl_stab}
This section explores the linear stability of $m$-splay states as defined in the previous section. In the first part, we provide a general result for the generic class of phase oscillator models. This result is then discussed for the Kuramoto-Sakaguchi model.
\subsection{General result on the stability of splay states}\label{sec:splayStability}
We start with the variational equation around an arbitrary $m$-splay state of system \eqref{eq:phaseoscModel_general}, which satisfies Hypetheses~\ref{hyp:mSplayExists} and \ref{hyp:phaseinv}.
This variational equation reads
\begin{align}\label{eq:VariEq}
	\frac{\mathrm{d}\delta\bm{\phi}(t)}{\mathrm{d}t}= L(\bm{\vartheta})	\delta\bm{\phi},
\end{align}
where $L(\bm{\vartheta})=\mathrm{D}F$ denotes the Jacobian of the coupling field $F$. We note that due to the shift symmetry of system \eqref{eq:phaseoscModel_general}, the Jacobian is time-independent, and has zero row sum for each row and thus possesses a zero eigenvalue corresponding to the eigenvector $\bm{1}$, i.e., $L(\bm{\vartheta})\bm{1} = 0$, for any $\bm{\vartheta}$. This eigenvector $\bm{1}$ acts along the symmetry action, see Hypothesis~\ref{hyp:phaseinv}.
 The nondiagonal entries of the $N\times N$ matrix $L$ are 
\begin{align}\label{eq:splayJacobianEntriesNonDiag}
	l_{ij} = \frac{\partial f_i}{\partial \phi_j}(\bm{\vartheta}).
\end{align}
Due to the zero row-sum condition, the diagonal elements $l_{ii}$ are given as
\begin{align}\label{eq:splayJacobianEntriesDiag}
	l_{ii}= -\sum_{j=1,j\ne i}^N\frac{\partial f_i}{\partial \phi_j}(\bm{\vartheta}).
\end{align}

The linear stability of the $m$-splay states is determined by the eigenvalues of the Jacobian matrix $L$. More precisely, we are interested in the real parts of these eigenvalues. The following Lemma provides useful insights into the spectral structure of a special class of matrices $L$ that are of major importance subsequently.

We denote a polynomial of degree $r\in\mathbb{N}_0$ over the complex field $\mathbb{C}$ with complex argument $\lambda\in\mathbb{C}$ as $p_r(\lambda)$, i.e., $p_r(\lambda)=\sum_{k=0}^r a_k \lambda^k$. The characteristic polynomial of an $N\times N$ matrix is denoted by $p_N(L,\lambda)$, i.e., $p_N(L,\lambda) = \det(L-\lambda \mathbb{I}_N)$.

\begin{lem}\label{lem:splayMatrixCharPolGeneral}
	Suppose an $N\times N$ ($N>1$) matrix $L$  possesses a zero eigenvalue with multiplicity $N-2$. Then, the characteristic polynomial $p_N(L,\lambda)$ is given by
	\begin{align}
		\det\left(L-\lambda\mathbb{I}_N\right) = (-1)^N\lambda^{(N-2)}\left(\lambda^2+a_{(N-1)}\lambda+a_{(N-2)}\right)
	\end{align}
	with the coefficients
	\begin{align}
		a_{(N-1)} &= -\sum_{j=1}^N l_{jj} = -\mathrm{Tr}(L),\\
		a_{(N-2)} &= \sum_{i=i}^N \sum_{j>i}^N \left(l_{ii}l_{jj}-l_{ij}l_{ji}\right) = 
		\frac{1}{2} \left( \mathrm{Tr}(L)^2 - \mathrm{Tr}(L^2) \right).  
	\end{align}
\end{lem}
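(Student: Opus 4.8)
The plan is to avoid expanding the $N\times N$ determinant directly and instead exploit the factorization of the characteristic polynomial over its roots. Let $\lambda_1,\dots,\lambda_N$ denote the eigenvalues of $L$ counted with algebraic multiplicity. By hypothesis $0$ is an eigenvalue of multiplicity at least $N-2$, so after relabeling we may take $\lambda_3=\cdots=\lambda_N=0$. Then
\begin{align}
\det(L-\lambda\mathbb{I}_N)&=\prod_{k=1}^N(\lambda_k-\lambda)=(-1)^N\lambda^{N-2}(\lambda-\lambda_1)(\lambda-\lambda_2)\\
&=(-1)^N\lambda^{N-2}\bigl(\lambda^2-(\lambda_1+\lambda_2)\lambda+\lambda_1\lambda_2\bigr),
\end{align}
which is already of the asserted shape, with $a_{(N-1)}=-(\lambda_1+\lambda_2)$ and $a_{(N-2)}=\lambda_1\lambda_2$; note this stays valid if $\lambda_1$ or $\lambda_2$ also happens to vanish.

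It then remains to rewrite $\lambda_1+\lambda_2$ and $\lambda_1\lambda_2$ in the stated invariant form. Since the remaining eigenvalues are zero, $\lambda_1+\lambda_2=\sum_{k=1}^N\lambda_k=\Tr(L)$, giving $a_{(N-1)}=-\Tr(L)$. Likewise, every product $\lambda_i\lambda_j$ with $\{i,j\}\ne\{1,2\}$ vanishes, so the second elementary symmetric function of the eigenvalues collapses to $e_2=\lambda_1\lambda_2$, whence $a_{(N-2)}=e_2$. Applying Newton's identity $e_2=\frac{1}{2}\bigl(e_1^2-p_2\bigr)$ with $e_1=\sum_k\lambda_k=\Tr(L)$ and $p_2=\sum_k\lambda_k^2=\Tr(L^2)$ yields $a_{(N-2)}=\frac{1}{2}\bigl(\Tr(L)^2-\Tr(L^2)\bigr)$.

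To obtain the representations in terms of the matrix entries $l_{ij}$, I would invoke the standard fact that $e_k$ equals the sum of all $k\times k$ principal minors of $L$. For $k=1$ this sum is $\sum_{j=1}^N l_{jj}=\Tr(L)$, and for $k=2$ it is $\sum_{i=1}^N\sum_{j>i}^N(l_{ii}l_{jj}-l_{ij}l_{ji})$. Substituting these into $a_{(N-1)}=-e_1$ and $a_{(N-2)}=e_2$ reproduces the two displayed formulas and, as a byproduct, reconfirms the identity $\sum_{i<j}(l_{ii}l_{jj}-l_{ij}l_{ji})=\frac{1}{2}(\Tr(L)^2-\Tr(L^2))$.

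The argument is essentially bookkeeping with elementary symmetric functions, so there is no genuine obstacle; the only mildly delicate point is the degenerate case in which more than $N-2$ eigenvalues vanish, but as noted the factorization above is unaffected. If one wishes to sidestep eigenvalues entirely — we only ever use the multiplicity of $0$ as a root of the characteristic polynomial, not diagonalizability — the same conclusion follows by observing that divisibility of $p_N(L,\lambda)$ by $\lambda^{N-2}$ forces the coefficients of $\lambda^0,\dots,\lambda^{N-3}$ to vanish, and then matching the coefficients of $\lambda^{N-1}$ and $\lambda^{N-2}$ with the $k=1$ and $k=2$ principal-minor sums.
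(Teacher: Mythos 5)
Your proof is correct, and it takes a genuinely different route from the paper's. You factor the characteristic polynomial over its roots, use that $N-2$ of them vanish (noting correctly that nothing breaks if more do), and then convert $\lambda_1+\lambda_2$ and $\lambda_1\lambda_2$ into invariants via Newton's identity $e_2=\tfrac12(e_1^2-p_2)$ and the standard fact that the coefficient sums $e_1,e_2$ are the $1\times1$ and $2\times2$ principal-minor sums; your closing remark that one can phrase everything through divisibility of $p_N(L,\lambda)$ by $\lambda^{N-2}$ and coefficient matching, without ever invoking diagonalizability, removes the only delicate point. This is essentially the ``general theorem on the coefficients of the characteristic polynomial'' route that the paper cites (the Leverrier--Faddeev-type statement) but deliberately does not reproduce: the paper instead gives a self-contained proof by complete induction on $N$, expanding $\det(L_{(N+1)(N+1)}-\lambda\mathbb{I}_{N+1})$ by Laplace along the first column and tracking only the two top coefficients. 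Your argument is shorter and makes the structural reason transparent (only two eigenvalues survive, so only $e_1$ and $e_2$ matter), at the price of importing the principal-minor characterization and Newton's identities as known facts; the paper's induction is longer and more computational but entirely elementary, needing nothing beyond the Laplace expansion. Either proof fully establishes the lemma.
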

This results is a consequence of a general theorem on the coefficients of the characteristic polynomial~\cite{HOU98a}. However, we provide a direct proof of Lemma~\ref{lem:splayMatrixCharPolGeneral} in the Appendix~\ref{sec:proofSplayLem} for the interested reader. With Lemma~\ref{lem:splayMatrixCharPolGeneral}, we are able to write explicit stability conditions for any $m$-splay state depending on the two explicit characteristics of the Jacobian $L$: its trace $\mathrm{Tr}\,(L)$ and the trace of its square $\mathrm{Tr}\,(L^2)$.

\begin{prop}\label{prop:splayStateLinStability}
	Suppose $L(\bm{\vartheta})$ is the Jacobian from~\eqref{eq:VariEq}, whose entries are given in~\eqref{eq:splayJacobianEntriesNonDiag} and~\eqref{eq:splayJacobianEntriesDiag}, with $\bm{\vartheta}$ corresponding to an $m$-splay state  of the coupled phase oscillator system~\eqref{eq:phaseoscModel_general}. Then, we distinguish the following two cases:\\
	(i) If  $2\mathrm{Tr}(L^2)\le\mathrm{Tr}(L)^2$, then the $m$-splay state is linearly stable if and only if
	\begin{align}
		\mathrm{Tr}(L)<0.
	\end{align} 
	(ii) If $2\mathrm{Tr}(L^2)>\mathrm{Tr}(L)^2$, then the $m$-splay state is linearly stable if and only if
	\begin{align}
		\mathrm{Tr}(L)<0  \quad \mbox{and}\quad 				\mathrm{Tr}(L^2)<\mathrm{Tr}(L)^2.
	\end{align}
\end{prop}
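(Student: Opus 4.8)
The plan is to reduce the spectral problem for the Jacobian $L(\bm{\vartheta})$ to the analysis of a single quadratic factor of its characteristic polynomial, and then to locate the roots of that quadratic using only $\mathrm{Tr}(L)$ and $\mathrm{Tr}(L^2)$.

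First I would pin down the multiplicity of the zero eigenvalue of $L(\bm{\vartheta})$. By Hypotheses~\ref{hyp:phaseinv} and~\ref{hyp:mSplayExists}, passing to the co-rotating frame $\bm{\phi}\mapsto\bm{\phi}-\Omega t\bm{1}$ turns every $m$-splay state into an equilibrium of $\dot{\bm{\psi}}=(\omega-\Omega)\bm{1}+F(\bm{\psi})$, so that $(\omega-\Omega)\bm{1}+F(\bm{\vartheta})=0$ for every $\bm{\vartheta}\in SM_m$. Differentiating this identity along an arbitrary smooth curve lying in $SM_m$ gives $L(\bm{\vartheta})\,\dot{\bm{\vartheta}}=0$, i.e.\ the whole tangent space $T_{\bm{\vartheta}}SM_m$ lies in $\ker L(\bm{\vartheta})$; since $\dim SM_m=N-2$ (the dimension established in Ref.~\onlinecite{ASH08}), the zero eigenvalue has geometric, hence algebraic, multiplicity at least $N-2$. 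Because these $N-2$ neutral directions are precisely the directions tangent to the manifold of $m$-splay states, they do not affect the transversal linear stability of the splay state, and whenever the algebraic multiplicity equals $N-2$ the zero eigenvalue is automatically semisimple; thus ``linear stability'' of the splay state amounts to both remaining eigenvalues having negative real part.

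Second, I would invoke Lemma~\ref{lem:splayMatrixCharPolGeneral}, which yields $\det(L-\lambda\mathbb{I}_N)=(-1)^N\lambda^{N-2}q(\lambda)$ with $q(\lambda)=\lambda^2-\mathrm{Tr}(L)\lambda+\tfrac12\bigl(\mathrm{Tr}(L)^2-\mathrm{Tr}(L^2)\bigr)$. I would then split on the sign of the discriminant $\Delta=\mathrm{Tr}(L)^2-2\bigl(\mathrm{Tr}(L)^2-\mathrm{Tr}(L^2)\bigr)=2\mathrm{Tr}(L^2)-\mathrm{Tr}(L)^2$. In case (i), $\Delta\le 0$, so the two roots are complex conjugate (or a real double root) with common real part $\tfrac12\mathrm{Tr}(L)$, whence stability is equivalent to $\mathrm{Tr}(L)<0$; one checks that this inequality forces the constant term $\tfrac12(\mathrm{Tr}(L)^2-\mathrm{Tr}(L^2))>0$, so no extra zero eigenvalue arises and the multiplicity is exactly $N-2$. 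In case (ii), $\Delta>0$, so the two roots are real and distinct; by Vieta's formulas they are both negative if and only if their sum $\mathrm{Tr}(L)$ is negative and their product $\tfrac12(\mathrm{Tr}(L)^2-\mathrm{Tr}(L^2))$ is positive, i.e.\ $\mathrm{Tr}(L)<0$ and $\mathrm{Tr}(L^2)<\mathrm{Tr}(L)^2$. This reproduces the two stated conditions, and the degenerate sub-cases (equality $2\mathrm{Tr}(L^2)=\mathrm{Tr}(L)^2$, or $\mathrm{Tr}(L)=0$, or $\mathrm{Tr}(L^2)=\mathrm{Tr}(L)^2$) correctly fall on the unstable or marginal side of the equivalences, since they produce a purely imaginary pair or an additional zero root.

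The algebraic bookkeeping (Newton's identity for the $\lambda^{N-2}$ coefficient, the discriminant computation, Vieta) is routine. The only genuinely delicate point is the first step: correctly identifying the kernel of $L(\bm{\vartheta})$ with the tangent space of $SM_m$ so that Lemma~\ref{lem:splayMatrixCharPolGeneral} is applicable, and being precise that the claimed ``linear stability'' is transversal stability of the $(N-2)$-dimensional splay manifold rather than asymptotic stability of an isolated equilibrium — which is exactly why the stability verdict depends only on the quadratic factor $q(\lambda)$.
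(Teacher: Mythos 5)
Your proposal is correct and follows essentially the same route as the paper: identify the $N-2$ neutral directions tangent to $SM_m$ (equivalently $\delta Z_m=0$), invoke Lemma~\ref{lem:splayMatrixCharPolGeneral} to reduce to the quadratic factor $\lambda^2-\mathrm{Tr}(L)\lambda+\tfrac12\bigl(\mathrm{Tr}(L)^2-\mathrm{Tr}(L^2)\bigr)$, and read off the two cases from the sign of the discriminant $2\mathrm{Tr}(L^2)-\mathrm{Tr}(L)^2$. Your only (harmless) deviations are stylistic: you justify the kernel property by differentiating the equilibrium identity along the splay manifold, where the paper asserts it directly, and you use Vieta's formulas where the paper writes the roots $\lambda_{1,2}=\tfrac12\mathrm{Tr}(L)\pm\tfrac12\sqrt{2\mathrm{Tr}(L^2)-\mathrm{Tr}(L)^2}$ explicitly.
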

\begin{proof}
	Since $\bm{\vartheta}$ corresponds to an $m$-splay state, there are $N-2$ neutral perturbation directions along the splay manifold. These perturbations are determined by the condition
	\begin{align*}
		\sum_{j=1}^N e^{\mathrm{i}m\vartheta_j}\delta\phi_j = 0,
	\end{align*}
	which follows from $\delta Z_m(\bm{\phi})=0$.
	In particular, the perturbation $\delta \bm{\phi}=\bm{1}$ 
	constitutes one of the dimensions of the neutral subspace.
	Thus $L$ possesses a zero eigenvalue with multiplicity $N-2$, and we can apply Lemma~\ref{lem:splayMatrixCharPolGeneral}. We obtain a quadratic equation with real-valued coefficients depending on $\mathrm{Tr}(L)$ and $\mathrm{Tr}(L^2)$ whose solution is given by
	\begin{align}
	\label{eq:Lambda}
		\lambda_{1,2} = \frac{\mathrm{Tr}(L)}{2} \pm \frac12 \sqrt{2\mathrm{Tr}(L^2)-\mathrm{Tr}(L)^2}.
	\end{align}
	The two cases follow immediately by considering the real parts of the solutions $\lambda_{1,2}$.
\end{proof}

Proposition~\ref{prop:splayStateLinStability} provides a very general linear stability condition that depends on two features of the Jacobian $L$ only. In particular, it depends on the sum of all eigenvalues $\lambda_i$ of $L$, i.e, $\Tr(L)=\sum_{i=1}^N \lambda_i$ and on the sum of all squares of eigenvalues, i.e., $\Tr(L^2)=\sum_{i=1}^N \lambda^2_i$.
\begin{figure}
	\centering
	\includegraphics[width=\columnwidth]{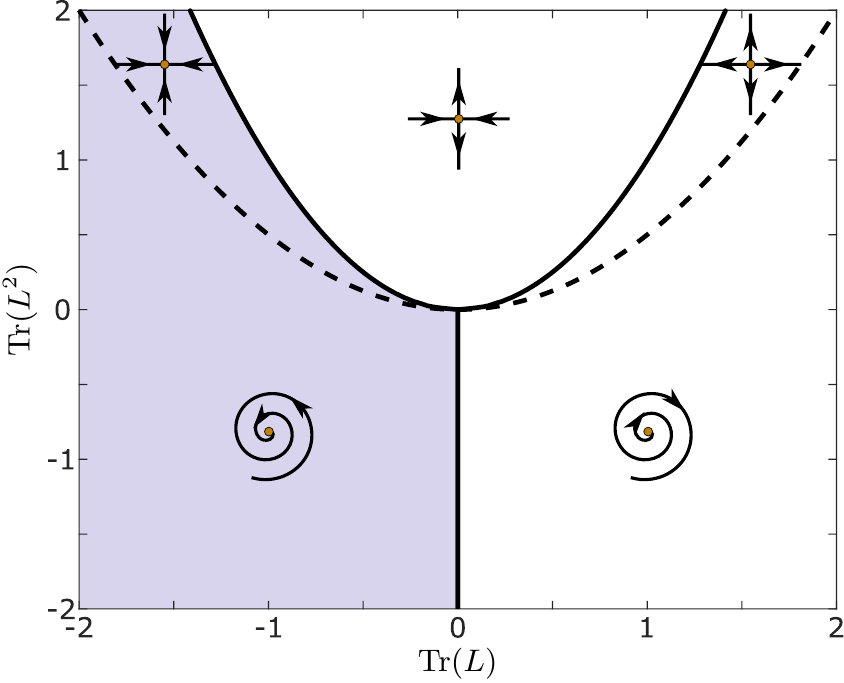}
	\caption{
		\label{fig:PhaseDiagramPO}
		Diagram showing the local properties of $m$-splay states in dependence of the values $\Tr(L)$ and $\Tr(L^2)$. The solid black curves indicate transitions between different stability features of the $m$-splay state, i.e., stable, saddle, repelling. The dashed lines indicate transition between nodes and foci. The shaded parts of the diagram correspond to linear stability.}
\end{figure}
Figure~\ref{fig:PhaseDiagramPO} shows the stability region for an arbitrary $m$-splay state in the  $(\Tr(L),\Tr(L^2))$-plane. In particular, the solid dashed lines indicate transition for which the real part of at least one eigenvalue crosses zero. An analytic expressions for theses lines is derived as follows. Assume that one eigenvalue $\lambda$ possesses zero real part, i.e., $\lambda=\mathrm{i}v$ with $v\in\mathbb{R}$. Substituting this assumption into the quadratic expression of Lemma~\ref{lem:splayMatrixCharPolGeneral}, we obtain
\begin{align*}
	-v^2 -\mathrm{i}\Tr(L)v + \frac{1}{2}(\Tr(L)^2-\Tr(L^2)) = 0.
\end{align*}
The latter equation can either be fulfilled with $v=0$ and $\Tr(L^2)=\Tr(L)^2$ or with $\Tr(L)=0$ and $\Tr(L^2)=-v^2$ for all $v\in\mathbb{R}$. Both conditions agree with the solid black lines in Figure~\ref{fig:PhaseDiagramPO}.

In the following sections, we apply the results of Lemma~\ref{lem:splayMatrixCharPolGeneral} and Proposition~\ref{prop:splayStateLinStability} in order to describe the stability of splay states for a particular model.
\subsection{Kuramoto-Sakaguchi model}\label{sec:KSmodel}
In this section, we study the linear stability of splay states in a globally coupled network of Kuramoto-Sakaguchi phase oscillators~\cite{KUR84,SAK86} given by
\begin{align}\label{eq:KSmodel}
	\dot{\phi}_i = \omega - \frac{1}{N}\sum_{j=1}^N \sin(\phi_i - \phi_j + \alpha),
\end{align}
where $\alpha$ is the phase-lag parameter. 
This system satisfies the Hypothesis~\ref{hyp:phaseinv} of phase-shift invariance. 
We note that the coupling function of~\eqref{eq:KSmodel} and its derivative can be written as
\begin{align}
	f_i &= -\Im\left(\overline{Z}_1e^{\mathrm{i}\alpha}e^{\mathrm{i}\phi_i}\right),\\
	\frac{\partial f_i}{\partial \phi_j} &= \frac{1}{N}\Re\left(e^{\mathrm{i}(\phi_i-\phi_j+\alpha)}\right).
\end{align}
With this, we immediately see that any phase distribution $\bm{\vartheta}$ that fulfills the $1$-splay condition $Z_1(\bm{\vartheta})=0$, corresponds to the 1-splay solution $\phi_i(t)=\omega t + \vartheta_i$ of \eqref{eq:KSmodel}. 
Therefore, system \eqref{eq:KSmodel} satisfies the Hypothesis~\ref{hyp:mSplayExists}. 
To derive the stability condition with Proposition~\ref{prop:splayStateLinStability}, we determine the entries of the Jacobian matrix $L(\bm{\vartheta})$ of the variational system for~\eqref{eq:KSmodel} around the $1$-splay states. The entries read
\begin{align}\label{eq:splayJacobianEntries_KSmodel}
	l_{ij}=\begin{cases}
		\frac{1}{N}\cos(\alpha)& \text{if }j=i, \\
		\frac{1}{N}\cos(\vartheta_i-\vartheta_j+\alpha) & \text{otherwise}. \\
	\end{cases}
\end{align}

With these preliminaries we obtain the following.
\begin{cor}\label{cor:splayStateLinStability_KSmodel}
	The $1$-splay state $\bm{\phi}=\omega \bm{1} + \bm{\vartheta}$ of Kuramoto-Sakaguchi system \eqref{eq:KSmodel} is linearly stable if and only if
	\begin{align}
		\cos\alpha<0.
	\end{align}
\end{cor}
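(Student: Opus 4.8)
The plan is to apply Proposition~\ref{prop:splayStateLinStability} directly, so the only work is to evaluate the two scalars $\mathrm{Tr}(L)$ and $\mathrm{Tr}(L^2)$ for the Jacobian~\eqref{eq:splayJacobianEntries_KSmodel}. First I observe that the two branches of~\eqref{eq:splayJacobianEntries_KSmodel} merge into the single expression $l_{ij}=\tfrac1N\cos(\vartheta_i-\vartheta_j+\alpha)$, valid for all $i,j$ (the diagonal is the $i=j$ case). Summing over $i$ immediately gives $\mathrm{Tr}(L)=\cos\alpha$.

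Next I would compute $\mathrm{Tr}(L^2)=\sum_{i,j} l_{ij}l_{ji}=\tfrac1{N^2}\sum_{i,j}\cos(\vartheta_i-\vartheta_j+\alpha)\cos(\vartheta_j-\vartheta_i+\alpha)$. Using the product-to-sum identity, the summand equals $\tfrac12\big(\cos2\alpha+\cos(2\vartheta_i-2\vartheta_j)\big)$, and the remaining double sum is recognized as $\sum_{i,j}\cos(2\vartheta_i-2\vartheta_j)=\big|\sum_j e^{2\mathrm{i}\vartheta_j}\big|^2=N^2R_2(\bm{\vartheta})^2$. Hence $\mathrm{Tr}(L^2)=\tfrac12\big(R_2(\bm{\vartheta})^2+\cos2\alpha\big)$.

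The key point — and the one subtlety worth highlighting — is that $\mathrm{Tr}(L^2)$ still carries the second-moment order parameter $R_2(\bm{\vartheta})$, which does \emph{not} vanish on a generic $1$-splay state (only $R_1=0$ is imposed). One must verify that this dependence drops out of the criteria of Proposition~\ref{prop:splayStateLinStability}. With $\cos2\alpha=2\cos^2\alpha-1$ one finds $2\mathrm{Tr}(L^2)-\mathrm{Tr}(L)^2=R_2(\bm{\vartheta})^2-\sin^2\alpha$ and $\mathrm{Tr}(L^2)-\mathrm{Tr}(L)^2=\tfrac12\big(R_2(\bm{\vartheta})^2-1\big)$. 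Therefore, whenever $R_2(\bm{\vartheta})<1$ (the generic case), the extra inequality in case~(ii) of Proposition~\ref{prop:splayStateLinStability} is automatically satisfied, and in \emph{both} cases the stability condition collapses to $\mathrm{Tr}(L)=\cos\alpha<0$, which is the assertion.

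The only real obstacle I anticipate is the bookkeeping of the degenerate configurations with $R_2(\bm{\vartheta})=1$, i.e.\ the balanced antipodal two-cluster states (which for $N=2$ are the only $1$-splay states). There $\mathrm{Tr}(L^2)=\mathrm{Tr}(L)^2$ and formula~\eqref{eq:Lambda} gives the eigenvalues $0$ and $\cos\alpha$, so a persistent zero eigenvalue rules out asymptotic stability; this case has to be excluded from the statement (or treated separately) but does not affect the generic conclusion. Everything else is the routine trigonometry outlined above.
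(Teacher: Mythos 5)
Your proposal is correct and takes essentially the same route as the paper: you evaluate $\mathrm{Tr}(L)=\cos\alpha$ and $\mathrm{Tr}(L^2)=\tfrac12\bigl(R_2^2(\bm{\vartheta})+\cos 2\alpha\bigr)$, which is exactly the information the paper packages as $a_{(N-1)}=-\cos\alpha$ and $a_{(N-2)}=\tfrac14\bigl(1-R_2^2(\bm{\vartheta})\bigr)$, and then reduce the criterion of Proposition~\ref{prop:splayStateLinStability} (equivalently Lemma~\ref{lem:splayMatrixCharPolGeneral}) to $\cos\alpha<0$. Your caveat about the degenerate antipodal configurations with $R_2(\bm{\vartheta})=1$, where an additional zero eigenvalue appears, is a careful refinement that the paper's proof passes over by simply invoking $1-R_2^2\ge 0$; it does not alter the generic conclusion.
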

\begin{proof}
	In order to prove this result we use  Lemma~\ref{lem:splayMatrixCharPolGeneral} and determine the coefficients of the quadratic polynomial. We obtain the following:
	\begin{align}
		 a_{(N-1)} = -\mathrm{Tr}(L) = -\cos\alpha
	\end{align}
	and
	\begin{align*}
		a_{(N-2)} &= \frac{\cos^2(\alpha)}{2}  - \frac{1}{4N^2}\sum_{i,j=1}^N \left(\cos(2(\vartheta_i-\vartheta_j))+\cos(2\alpha)\right) \\
		&= \frac{1}{4} - \frac{1}{4N}\sum_{j=1}^N R_2(\bm{\vartheta})\Re\left(e^{\mathrm{i}(\vartheta_j-\rho_2(\bm{\vartheta}))}\right) \\
		&= \frac{1}{4} \left(1 - R^2_2(\bm{\vartheta})\right).
	\end{align*}
	Since $1 - R^2_2(\bm{\vartheta})\ge 0$ for any $\bm{\vartheta}$, the stability condition for the $1$-splay state is $a_{(N-1)}>0$, which is equivalent to 
	 $\cos(\alpha)<0$.
\end{proof}

Corollary~\ref{cor:splayStateLinStability_KSmodel} implies that a $1$-splay state is linearly stable for~\eqref{eq:KSmodel} as long as $\cos\alpha<0$. Thus, the stability does not depend on the particular shape of the splay state, i.e., the particular element $\bm{\vartheta}$ of the manifold $SM_1$. However, the bifurcation occurring at $\cos\alpha=0$ might be different depending on the distribution of the phases. 
Due to Proposition~\ref{prop:splayStateLinStability}, the eigenvalues of the Jacobian~\eqref{eq:splayJacobianEntries_KSmodel} may have imaginary parts if $2\mathrm{Tr}(L^2)<\mathrm{Tr}(L)^2$, i.e., $\cos^2\alpha<(1-R^2_2(\bm{\vartheta}))$. In this case, the imaginary parts are given by
$$\mathrm{Im}\,\lambda_{1,2}=\pm\frac{1}{2}\sqrt{\sin^2\alpha - R^2_2(\bm{\vartheta})}.$$ 
We further note that the second moment of the order parameter $R_2$ characterizes the whole $1$-splay manifold with respect to the transverse stability.

In the following sections, we extend the previous findings to more complex models of coupled phase oscillators. More precisely, we consider phase oscillator models with inertia and models of adaptively coupled phase oscillators.
\section{Phase oscillator models with inertia\label{sec:POwI}}

As a first extension of our results from Section~\ref{sec:1Cl_stab}, we study the linear stability of generalized splay states in models of $N$ coupled phase oscillators with inertia. Similar to the previous section, we first provide the general result and then consider a more specific class of models.
\subsection{Stability of $m$-splay states\label{sec:POwISplay}}
We consider the following class of phase oscillator models with inertia
\begin{align}\label{eq:POwI_2order}
	M\frac{\mathrm{d}^2}{\mathrm{d}t^2}\bm{\phi} +\gamma\frac{\mathrm{d}}{\mathrm{d}t}\bm{\phi} & =  p \bm{1} + F(\bm{\phi}),
\end{align}
where $p\in\mathbb{R}$ corresponds to the nondimensionalized power generation and consumption in power grid models and is related to the natural frequency in Eq.~\eqref{eq:phaseoscModel_general} by $\omega=p/\gamma$, the parameter $M$ is the inertia, and $\gamma>0$ is the damping constant, which extends Eq.~\eqref{eq:phaseoscModel_general}. Note that, for identical oscillators, the number of parameters can be reduced to two. Therefore, we assume $M=1$ in the following.

Here, we also assume that \eqref{eq:POwI_2order} possesses a set of $m$-splay states $\bm{\phi}=\Omega t + \bm{\vartheta}$ for all $\bm{\vartheta}\in SM_m$, i.e., Hypothesis~\ref{hyp:mSplayExists} holds. We may write \eqref{eq:POwI_2order} as a set of first order differential equations
\begin{align}\label{eq:POwI_1order}
	\frac{\mathrm{d}}{\mathrm{d}t}\bm{\phi} &= \bm{\psi},\\
	\frac{\mathrm{d}}{\mathrm{d}t}\bm{\psi} &= -\gamma\bm{\psi} +  p \bm{1} + F(\bm{\phi}),
\end{align}
where we introduce the new dynamical variable $\bm{\psi}\in\mathbb{R}^N$.

In order to determine the linear stability of $m$-splay states, we consider the variational equation for system~\eqref{eq:POwI_1order} around an arbitrary $m$-splay state, which reads
\begin{align}\label{eq:KwI_variEq}
	\frac{\mathrm{d}}{\mathrm{d}t}
	\begin{pmatrix}
		\delta\bm{\phi} \\
		\delta\bm{\psi}
	\end{pmatrix} =
	\begin{pmatrix}
		0 & \mathbb{I}_N \\
		L(\bm{\vartheta}) & -\gamma\mathbb{I}_N
	\end{pmatrix}
	\begin{pmatrix}
		\delta\bm{\phi} \\
		\delta\bm{\psi}
	\end{pmatrix}
	= J(\bm{\vartheta})
	\begin{pmatrix}
		\delta\bm{\phi} \\
		\delta\bm{\psi}
	\end{pmatrix}
\end{align}
where $J$ denotes the Jacobian and the entries of the $N\times N$ matrix $L$ are given as in~\eqref{eq:splayJacobianEntriesNonDiag}--\eqref{eq:splayJacobianEntriesDiag}.

The linear stability of the $m$-splay states is determined by the eigenvalues of the Jacobian matrix $J$. The following Lemma provides a useful tool to find these eigenvalues. 

\begin{lem}~\label{lem:KwISchurRed}
The $2N$ eigenvalues of the matrix $\begin{pmatrix}
		0 & m_1 \mathbb{I}_N \\
		L & m_2 \mathbb{I}_N
	\end{pmatrix}$  with $m_1,m_2\in\mathbb{C}$ are given by the solutions of the $N$ quadratic equations
	\begin{align}\label{eq:KwISchurRed_quadratic}
		\mu^2 - m_2\mu - m_1\lambda_i = 0, \quad i=1,\dots,N,
	\end{align}
	where $\lambda_1,\dots,\lambda_N$ are the eigenvalues of $L$.
\end{lem}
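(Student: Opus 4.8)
The plan is to exploit the block structure together with the fact that $L$ can be diagonalized (or, more robustly, brought to Schur/triangular form) by a similarity transformation that acts trivially on the block decomposition. Concretely, suppose first that $L$ is diagonalizable, $S^{-1} L S = \Lambda = \mathrm{diag}(\lambda_1,\dots,\lambda_N)$. Then conjugating the $2N\times 2N$ matrix by $\mathrm{diag}(S,S)$ gives
\begin{align*}
	\begin{pmatrix} S^{-1} & 0 \\ 0 & S^{-1} \end{pmatrix}
	\begin{pmatrix} 0 & m_1\mathbb{I}_N \\ L & m_2\mathbb{I}_N \end{pmatrix}
	\begin{pmatrix} S & 0 \\ 0 & S \end{pmatrix}
	= \begin{pmatrix} 0 & m_1\mathbb{I}_N \\ \Lambda & m_2\mathbb{I}_N \end{pmatrix},
\end{align*}
since $m_1\mathbb{I}_N$ and $m_2\mathbb{I}_N$ commute with $S$ and $S^{-1}$. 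A simultaneous permutation of rows and columns (pairing the $i$th $\delta\phi$-coordinate with the $i$th $\delta\psi$-coordinate) then block-diagonalizes this into $N$ independent $2\times 2$ blocks $\begin{pmatrix} 0 & m_1 \\ \lambda_i & m_2 \end{pmatrix}$, whose characteristic polynomial is exactly $\mu^2 - m_2\mu - m_1\lambda_i = 0$. Hence the $2N$ eigenvalues are precisely the roots of the $N$ quadratics~\eqref{eq:KwISchurRed_quadratic}.

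To handle the general (possibly non-diagonalizable) case and make the argument clean, I would instead compute the characteristic polynomial directly via the Schur complement formula for block matrices. Writing $J_0 = \begin{pmatrix} 0 & m_1\mathbb{I}_N \\ L & m_2\mathbb{I}_N \end{pmatrix}$, we have
\begin{align*}
	\det(J_0 - \mu\mathbb{I}_{2N}) = \det\begin{pmatrix} -\mu\mathbb{I}_N & m_1\mathbb{I}_N \\ L & (m_2-\mu)\mathbb{I}_N \end{pmatrix}.
\end{align*}
When $\mu\neq 0$ the top-left block is invertible, and the Schur complement gives $\det(-\mu\mathbb{I}_N)\cdot\det\bigl((m_2-\mu)\mathbb{I}_N - L\cdot(-\mu\mathbb{I}_N)^{-1}\cdot m_1\mathbb{I}_N\bigr) = (-\mu)^N \det\bigl((m_2-\mu)\mathbb{I}_N + \tfrac{m_1}{\mu}L\bigr)$. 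Since $L$ has eigenvalues $\lambda_1,\dots,\lambda_N$, this equals $(-\mu)^N \prod_{i=1}^N\bigl(m_2-\mu+\tfrac{m_1}{\mu}\lambda_i\bigr) = \prod_{i=1}^N\bigl(-\mu(m_2-\mu) - m_1\lambda_i\bigr) = \prod_{i=1}^N\bigl(\mu^2 - m_2\mu - m_1\lambda_i\bigr)$. By continuity of both sides as polynomials in $\mu$, the identity $\det(J_0-\mu\mathbb{I}_{2N}) = \prod_{i=1}^N(\mu^2 - m_2\mu - m_1\lambda_i)$ holds for all $\mu$ including $\mu=0$, which immediately yields the claim.

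The only mild subtlety — and the step I would present most carefully — is the passage from $\mu\neq 0$ to all $\mu$: one should note that both sides are monic-up-to-sign polynomials of degree $2N$ in $\mu$ agreeing on a set with an accumulation point, hence identically equal, so no separate treatment of $\mu=0$ (or of singular $L$) is needed. Everything else is routine block-matrix manipulation; I would keep the computation short and cite the standard Schur complement determinant identity rather than re-deriving it.
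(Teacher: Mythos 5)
Your proposal is correct. Note that the paper does not prove this lemma itself; it only cites Ref.~\onlinecite{TUM19}, so there is no in-house argument to compare against line by line. Your Schur-complement computation is, however, exactly the technique the paper does use for the analogous adaptive-case result (Lemma~\ref{lem:POAdaptiveSchurRed}), so methodologically you are on the same track as the authors; the cited reference's proof is the standard eigenvector-ansatz version of the same fact (take $Lv=\lambda_i v$, seek eigenvectors of the block matrix of the form $(a v, b v)^T$, which immediately yields $\mu^2-m_2\mu-m_1\lambda_i=0$). Both of your arguments check out: the diagonalizable-case similarity plus shuffle permutation is fine (and would in fact extend to general $L$ if you used the Schur triangular form instead of diagonalization), and the determinant identity $\det(J_0-\mu\mathbb{I}_{2N})=\prod_{i=1}^N(\mu^2-m_2\mu-m_1\lambda_i)$ is established correctly, with the $\mu\neq 0$ restriction properly removed by the polynomial-identity argument. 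One small simplification you could make: since the lower-right block $(m_2-\mu)\mathbb{I}_N$ is a scalar matrix and hence commutes with $L$, the identity $\det\begin{pmatrix} A & B\\ C & D\end{pmatrix}=\det(AD-BC)$ for commuting lower blocks gives $\det\left((\mu^2-m_2\mu)\mathbb{I}_N-m_1 L\right)$ in one step, valid for all $\mu$, so the continuity/accumulation-point step becomes unnecessary.
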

The proof of Lemma~\ref{lem:KwISchurRed} can be found in Ref.~\onlinecite{TUM19}. With this Lemma and Lemma~\ref{lem:splayMatrixCharPolGeneral}, the following conditions for the local stability of the $m$-splay states of \eqref{eq:POwI_2order} are derived.
\begin{prop}\label{prop:splayStateLinStability_POwImodel}
	Suppose $J(\bm{\vartheta})$ is the Jacobian of~\eqref{eq:KwI_variEq} and $L(\bm{\vartheta})$ possesses the entries as given in~\eqref{eq:splayJacobianEntriesNonDiag}--\eqref{eq:splayJacobianEntriesDiag}, where $\bm{\vartheta}$ corresponds to an $m$-splay state which solves~\eqref{eq:POwI_2order}. Then the $m$-splay state is linearly stable if and only if $\gamma>0$ and $\mathrm{Re}(\mu_{1,2,3,4})<0$, where 
	\begin{align}\label{eq:POwI_SplayEigenVal01}
	\mu_{1,2,3,4} = - \frac{\gamma}{2} \pm \sqrt{\left(\frac{\gamma}{2}\right)^2+ \lambda_{1,2}}, \\
	\lambda_{1,2} = \frac{\mathrm{Tr}(L)}{2} \pm \frac12 \sqrt{2\mathrm{Tr}(L^2)-\mathrm{Tr}(L)^2}.
	\end{align}
It is interesting to note that $\lambda_{1,2}$ equals the eigenvalues \eqref{eq:Lambda} of the phase oscillator model without inertia.
\end{prop}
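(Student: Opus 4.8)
The plan is to reduce the spectral problem for the $2N\times 2N$ Jacobian $J$ to $N$ scalar quadratics via Lemma~\ref{lem:KwISchurRed}, and then to insert the eigenvalues of $L$ obtained from Lemma~\ref{lem:splayMatrixCharPolGeneral} exactly as in the proof of Proposition~\ref{prop:splayStateLinStability}. Concretely, the Jacobian in~\eqref{eq:KwI_variEq}, $J=\begin{pmatrix}0 & \mathbb{I}_N\\ L & -\gamma\mathbb{I}_N\end{pmatrix}$, has the block structure required by Lemma~\ref{lem:KwISchurRed} with $m_1=1$ and $m_2=-\gamma$. Hence the $2N$ eigenvalues of $J$ are the roots of the $N$ quadratic equations $\mu^2+\gamma\mu-\lambda_i=0$, $i=1,\dots,N$, where $\lambda_1,\dots,\lambda_N$ are the eigenvalues of $L$; solving each quadratic yields $\mu=-\gamma/2\pm\sqrt{(\gamma/2)^2+\lambda_i}$. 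Since $L=\mathrm{D}F(\bm{\vartheta})$ is time-independent by the phase-shift symmetry (Hypothesis~\ref{hyp:phaseinv}), this constant-coefficient computation indeed governs linear stability.

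Next I would exploit that $\bm{\vartheta}$ is an $m$-splay state. As in the proof of Proposition~\ref{prop:splayStateLinStability}, there are $N-2$ neutral perturbation directions tangent to the splay manifold (including $\delta\bm{\phi}=\bm{1}$), so $L$ has a zero eigenvalue of multiplicity $N-2$ and Lemma~\ref{lem:splayMatrixCharPolGeneral} applies; the two remaining eigenvalues of $L$ are $\lambda_{1,2}=\tfrac12\Tr(L)\pm\tfrac12\sqrt{2\Tr(L^2)-\Tr(L)^2}$, the same numbers as the eigenvalues~\eqref{eq:Lambda} of the inertia-free model. Feeding $\mathrm{spec}(L)$ into the quadratics, each of the $N-2$ zero eigenvalues of $L$ gives $\mu^2+\gamma\mu=0$, i.e. one eigenvalue $\mu=0$ and one eigenvalue $\mu=-\gamma$, while $\lambda_1$ and $\lambda_2$ produce the four roots $\mu_{1,2,3,4}=-\gamma/2\pm\sqrt{(\gamma/2)^2+\lambda_{1,2}}$. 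This accounts for all $2(N-2)+4=2N$ eigenvalues of $J$.

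Finally I would assemble the criterion. The $N-2$ eigenvalues $\mu=0$ of $J$ arise from $\ker L\times\{0\}$, which is tangent to the (invariant, by Hypothesis~\ref{hyp:mSplayExists}) family of $m$-splay solutions; they are neutral and, just as in Proposition~\ref{prop:splayStateLinStability}, do not affect stability of the splay manifold. What remains transverse to it is the eigenvalue $-\gamma$ of multiplicity $N-2$, which lies strictly in the left half-plane precisely when $\gamma>0$, together with $\mu_{1,2,3,4}$. Hence the $m$-splay state is linearly stable (in the sense of stability of the splay manifold) if and only if $\gamma>0$ and $\Re(\mu_{1,2,3,4})<0$, which is the claim. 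I expect the only genuine subtlety to be the bookkeeping of the zero eigenvalue: one must confirm that the $N-2$ copies of $\lambda=0$ in $\mathrm{spec}(L)$ give $N-2$ harmless semisimple zeros of $J$ rather than a larger non-semisimple block (which follows from $\dim\ker J=\dim\ker L=N-2$ via the geometry of $SM_m$), and one must not overlook the accompanying eigenvalues at $-\gamma$, which is exactly what forces the condition $\gamma>0$ to appear explicitly in the statement.
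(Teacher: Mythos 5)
Your proposal is correct and follows essentially the same route as the paper: Lemma~\ref{lem:KwISchurRed} with $m_1=1$, $m_2=-\gamma$ reduces the spectrum of $J$ to the quadratics $\mu^2+\gamma\mu-\lambda_i=0$, the splay condition gives $\operatorname{spec}(L)=\{0\ (\text{mult. }N-2),\lambda_1,\lambda_2\}$ via Lemma~\ref{lem:splayMatrixCharPolGeneral}/Proposition~\ref{prop:splayStateLinStability}, and the zero eigenvalues yield the neutral $\mu=0$ and the $\mu=-\gamma$ branch, leaving $\mu_{1,2,3,4}$ as the decisive roots. Your additional remark on the semisimplicity of the zero eigenvalues of $J$ is a harmless refinement the paper leaves implicit.
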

\begin{proof}
	Due to Proposition~\ref{prop:splayStateLinStability}, the eigenvalues of $L$ are given by $\lambda_{3}=\dots=\lambda_N = 0$ and 
	\begin{align*}
		\lambda_{1,2}´ = \frac{\mathrm{Tr}(L)}{2} \pm \frac12 \sqrt{2\mathrm{Tr}(L^2)-\mathrm{Tr}(L)^2}.
	\end{align*}
	Using Lemma~\ref{lem:KwISchurRed}, the eigenvalues of the Jacobian $J$ are given by the solutions of 
	$$
	\mu^2 + \gamma \mu - \lambda_i = 0.
	$$
	The $N-2$ zero eigenvalues $\lambda_{3,\dots,N} = 0$ lead to $\mu=0$ and $\mu=-\gamma$, each with the multiplicity $N-2$.  Besides, there are roots 
	\begin{align}
		\mu_{i,1,2} = - \frac{\gamma}{2} \pm \sqrt{\left(\frac{\gamma}{2}\right)^2+\lambda_i}, \quad i=1,2,
	\end{align}
	which yield the result.
\end{proof}
Note that the characteristic polynomial of the Jacobian in~\eqref{eq:KwI_variEq} can be rewritten in a form that agrees with the findings in Ref.~\onlinecite{BRE21} for the case of $N=4$ oscillators. In particular, the eigenvalues~\eqref{eq:POwI_SplayEigenVal01} can be also expressed as solutions of the equation
\begin{multline}\label{eq:POwISplayStabQuartic}
	\mu^4 + 2\gamma\mu^3 + (\gamma^2-\mathrm{Tr}(L))\mu^2\\
	-\gamma\mathrm{Tr}(L)\mu+\frac{\mathrm{Tr}(L)^2-\mathrm{Tr}(L^2)}{2} = 0.
\end{multline}

We conclude that the stability properties of $m$-splay states depend only on the parameters $\gamma$, $\Tr(L)$ and $\Tr(L^2)$. 
The quantities $\Tr(L)$ and $\Tr(L^2)$ contain also information
about the specific splay states. The values $\Tr(L)$ and $\Tr(L^2)$
provide a foliation of the splay manifold so that each sheet of this foliation, with the same values of $\Tr(L)$ and $\Tr(L^2)$, possesses the same transverse local dynamics.
In order to find the boundary of the stability region, we substitute $\mu = \mathrm{i}v$ into \eqref{eq:POwISplayStabQuartic}, and find
\begin{multline}\label{eq:POwISurfaceEq}
	v^4 - (\gamma^2-\mathrm{Tr}(L))v^2 +\frac{\mathrm{Tr}(L)^2-\mathrm{Tr}(L^2)}{2}\\
	- \mathrm{i}\gamma v \left(2v^2+\mathrm{Tr}(L)\right) = 0.
\end{multline}
The latter equation is solved when either one of the following conditions is fulfilled: \\
(i) $v=0$ and $\mathrm{Tr}(L^2)=\mathrm{Tr}(L)^2$ for all $\gamma>0$, \\
(ii) $-2v^2=\Tr(L)$ and $\Tr(L)^2/2+\gamma \Tr(L) - \Tr(L^2)=0$ for all $v\in\mathbb{R}$. 

In Figure~\ref{fig:PhaseDiagramInertiaPO}(a), we display both conditions as surfaces in $(\Tr(L),\Tr(L^2),\gamma)$-space. In panels (b)-(e) of Fig.~\ref{fig:PhaseDiagramInertiaPO}, we show 2-parameter cross-sections with fixed values of $\gamma$, where stable regions are shaded. We note that the area corresponding to stable dynamics increases with increasing $\gamma$. 
\begin{figure}
	\centering
	\includegraphics{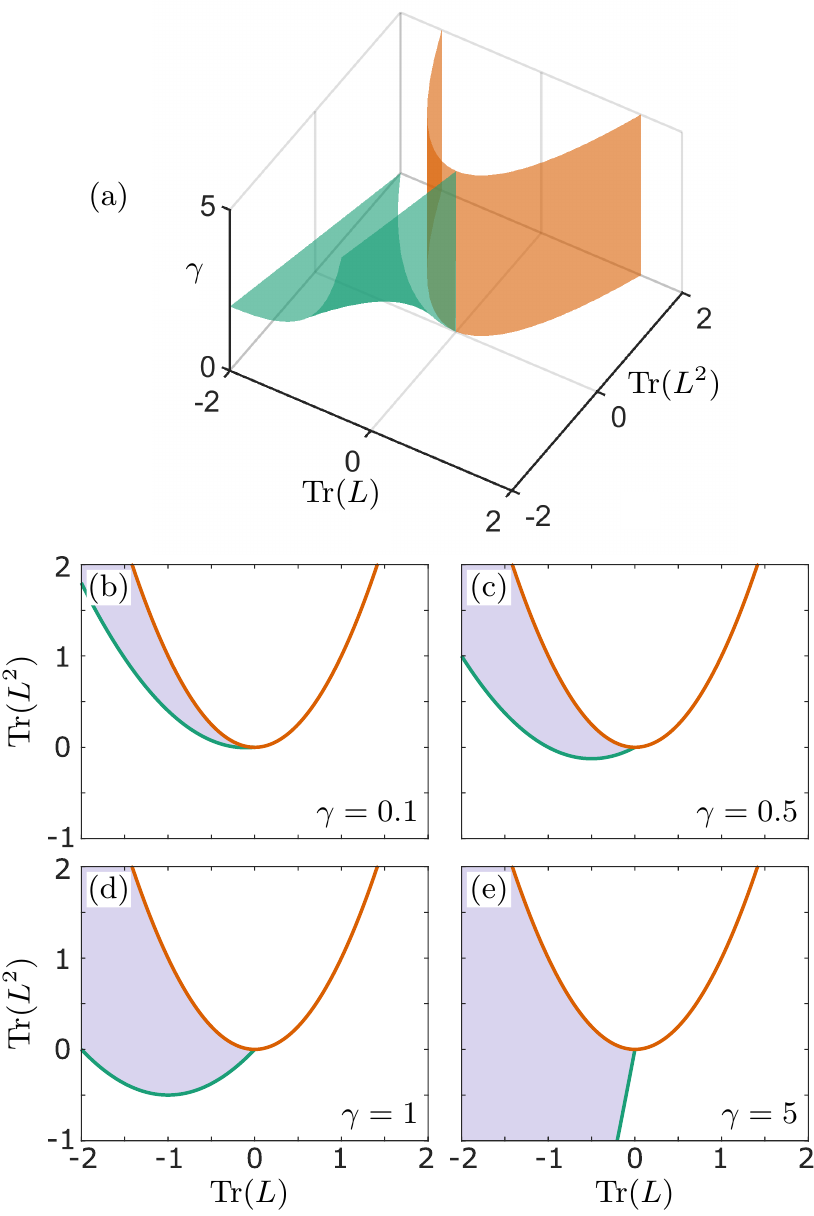}
	\caption{
		\label{fig:PhaseDiagramInertiaPO}
		Phase diagram showing the local stability of the $m$-splay states for system~\eqref{eq:POwI_2order} in dependence on $\gamma$, $\Tr(L)$, and $\Tr(L^2)$. In panel (a), the surfaces separating stable from unstable regimes are depicted in orange and green corresponding to the conditions (i) and (ii) for Eq.~\eqref{eq:POwISurfaceEq}, respectively. In panels (b)-(e), sections for fixed values  $\gamma=0.1,0.5,1,5$ are shown, respectively. Stable regions are shaded. The line colors indicate the cross-sections with the corresponding surfaces.}
\end{figure}

\subsection{Application to the Kuramoto-Sakaguchi model with inertia\label{sec:KSwI}}
In the following, we study the linear stability of generalized splay states in a globally coupled network of $N$ coupled phase oscillators with inertia~\cite{MAI17,JAR18,TAH19,TUM19,BER21a,TOT20,BRE21} of the form
\begin{align}\label{eq:KwI_2order}
	M\ddot{\phi}_i +\gamma\dot{\phi}_i & = p -\frac{\sigma}{N}\sum_{j=1}^N\sin(\phi_i-\phi_j+\alpha),
\end{align}
where $\phi_{i}\in[0,2\pi)$ represents the phase of the $i$th rotator. The parameter $M$ is the inertia constant, $\gamma>0$ is the damping constant, and $\sigma$ is the coupling constant. The parameter $\alpha$ can be regarded as a phase-lag of the interaction~\cite{SAK86}.

As for the Kuramoto-Sakaguchi model, see Section~\ref{sec:KSmodel}, the coupling functions and their derivatives can be written as
\begin{align}
	f_i &= -\sigma\Im\left(\overline{Z}_1e^{\mathrm{i}\alpha}e^{\mathrm{i}\phi_i}\right),\\
	\frac{\partial f_i}{\partial \phi_j} &= \frac{\sigma}{N}\Re\left(e^{\mathrm{i}(\phi_i-\phi_j+\alpha)}\right).
\end{align}
With this, we immediately see that any $1$-splay state is a solution of~\eqref{eq:KwI_2order} since $Z_1=0$ and hence $f_i=0$ for all $\bm{\vartheta}\in SM_1$ and $i=1,\dots,N$.

Proposition \ref{prop:splayStateLinStability_POwImodel} leads to the following criteria for the stability of 1-splay states.
\begin{cor}\label{cor:splayStateLinStability_KwImodel}
 The $1$-splay state of system \eqref{eq:KwI_2order} is linearly stable if and only if $\gamma>0$ and $\mathrm{Re}(\mu_{1,2,3,4})<0$, where
 	\begin{align}\label{eq:KwI_SplayEigenVal01}
 	\mu_{1,2,3,4} = - \frac{\gamma}{2} \pm \sqrt{\left(\frac{\gamma}{2}\right)^2+ \lambda_{1,2}}, \\
\lambda_{1,2} =
\frac{\sigma}{2}\left(\cos\alpha + \sqrt{R^2_2(\bm{\vartheta})-\sin^2\alpha}\right).
	\end{align}
\end{cor}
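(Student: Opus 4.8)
The plan is to reduce everything to Proposition~\ref{prop:splayStateLinStability_POwImodel} by evaluating the two scalars $\mathrm{Tr}(L)$ and $\mathrm{Tr}(L^2)$ for the Jacobian of~\eqref{eq:KwI_2order} at a $1$-splay state. The discussion preceding the corollary already records that every $\bm{\vartheta}\in SM_1$ yields a solution of~\eqref{eq:KwI_2order} (because $Z_1=0$ forces $f_i=0$), so Hypothesis~\ref{hyp:mSplayExists} holds, and phase-shift equivariance (Hypothesis~\ref{hyp:phaseinv}) is immediate. Hence Proposition~\ref{prop:splayStateLinStability_POwImodel} applies verbatim, and the only remaining task is to show that its generic $\lambda_{1,2}=\tfrac12\mathrm{Tr}(L)\pm\tfrac12\sqrt{2\mathrm{Tr}(L^2)-\mathrm{Tr}(L)^2}$ collapses to the stated $\tfrac{\sigma}{2}\bigl(\cos\alpha\pm\sqrt{R_2^2(\bm{\vartheta})-\sin^2\alpha}\bigr)$.

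First I would write the Jacobian entries. From $\partial f_i/\partial\phi_j=\tfrac{\sigma}{N}\cos(\phi_i-\phi_j+\alpha)$, the zero-row-sum relation~\eqref{eq:splayJacobianEntriesDiag}, and the $1$-splay identity $\sum_{j}\cos(\vartheta_i-\vartheta_j+\alpha)=N\,\Re\!\bigl(e^{\mathrm{i}(\vartheta_i+\alpha)}\overline{Z_1(\bm{\vartheta})}\bigr)=0$, one obtains $l_{ii}=\tfrac{\sigma}{N}\cos\alpha$ and $l_{ij}=\tfrac{\sigma}{N}\cos(\vartheta_i-\vartheta_j+\alpha)$ for $i\ne j$, i.e.\ exactly $\sigma$ times the matrix of Section~\ref{sec:KSmodel}. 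Then $\mathrm{Tr}(L)=\sigma\cos\alpha$, and, since the diagonal terms agree with the general formula at $i=j$, $\mathrm{Tr}(L^2)=\sum_{i,j}l_{ij}l_{ji}=\tfrac{\sigma^2}{N^2}\sum_{i,j}\cos(\vartheta_i-\vartheta_j+\alpha)\cos(\vartheta_j-\vartheta_i+\alpha)$. Applying $\cos(x+\alpha)\cos(x-\alpha)=\tfrac12(\cos 2x+\cos 2\alpha)$ with $x=\vartheta_i-\vartheta_j$ and using $\sum_{i,j}\cos\bigl(2(\vartheta_i-\vartheta_j)\bigr)=\bigl|\sum_j e^{2\mathrm{i}\vartheta_j}\bigr|^2=N^2R_2^2(\bm{\vartheta})$ gives $\mathrm{Tr}(L^2)=\tfrac{\sigma^2}{2}\bigl(R_2^2(\bm{\vartheta})+\cos 2\alpha\bigr)$. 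This is the same bookkeeping already carried out in the proof of Corollary~\ref{cor:splayStateLinStability_KSmodel} (alternatively, one may simply rescale that result by $\sigma$, since $\mathrm{Tr}(L)=\sigma\mathrm{Tr}(\tilde L)$ and $\mathrm{Tr}(L^2)=\sigma^2\mathrm{Tr}(\tilde L^2)$ for $L=\sigma\tilde L$).

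Substituting into the discriminant, $2\mathrm{Tr}(L^2)-\mathrm{Tr}(L)^2=\sigma^2\bigl(R_2^2(\bm{\vartheta})+\cos 2\alpha-\cos^2\alpha\bigr)=\sigma^2\bigl(R_2^2(\bm{\vartheta})-\sin^2\alpha\bigr)$, whence
\[
\lambda_{1,2}=\frac{\sigma\cos\alpha}{2}\pm\frac12\sqrt{\sigma^2\bigl(R_2^2(\bm{\vartheta})-\sin^2\alpha\bigr)}=\frac{\sigma}{2}\Bigl(\cos\alpha\pm\sqrt{R_2^2(\bm{\vartheta})-\sin^2\alpha}\Bigr),
\]
and feeding this $\lambda_{1,2}$ into the $\mu$-formula of Proposition~\ref{prop:splayStateLinStability_POwImodel} completes the proof.

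I do not expect a genuine obstacle here: all the structural work — the $N-2$ multiplicity of the zero eigenvalue of $L$ and the Schur reduction of the inertial Jacobian to $N$ quadratics — is already carried by Lemmas~\ref{lem:splayMatrixCharPolGeneral} and~\ref{lem:KwISchurRed} and by Proposition~\ref{prop:splayStateLinStability_POwImodel}. The only points requiring mild care are the trigonometric simplification $\cos 2\alpha-\cos^2\alpha=-\sin^2\alpha$ and carrying the coupling factor $\sigma$ (respectively $\sigma^2$ in $\mathrm{Tr}(L^2)$) through consistently; the possibility $R_2^2(\bm{\vartheta})<\sin^2\alpha$, which makes $\lambda_{1,2}$ complex, needs no separate treatment since the stability criterion $\mathrm{Re}(\mu_{1,2,3,4})<0$ already covers it.
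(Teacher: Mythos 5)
Your proposal is correct and follows exactly the route the paper intends: the corollary is stated as a direct consequence of Proposition~\ref{prop:splayStateLinStability_POwImodel}, and your evaluation of $\mathrm{Tr}(L)=\sigma\cos\alpha$ and $\mathrm{Tr}(L^2)=\tfrac{\sigma^2}{2}\bigl(R_2^2(\bm{\vartheta})+\cos 2\alpha\bigr)$ is the same bookkeeping already carried out (with $\sigma=1$) in the proof of Corollary~\ref{cor:splayStateLinStability_KSmodel}. The trigonometric reduction to $2\mathrm{Tr}(L^2)-\mathrm{Tr}(L)^2=\sigma^2\bigl(R_2^2(\bm{\vartheta})-\sin^2\alpha\bigr)$ and the substitution into the $\mu$-formula are exactly what is needed, so there is no gap.
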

Note that the characteristic polynomial of the Jacobian in~\eqref{eq:KwI_variEq}  can be rewritten in a form that agrees with the findings in Ref.\onlinecite{BRE21} for the $N=4$ case. In particular, the eigenvalues~\eqref{eq:KwI_SplayEigenVal01} can be  expressed as solutions of the equation
\begin{multline}\label{eq:KwISplayStabQuartic}
	\mu^4 + 2\gamma\mu^3 + (\gamma^2-\sigma\cos(\alpha))\mu^2\\
	-\gamma\sigma\cos(\alpha)\mu+\frac{\sigma^2}{4}(1-R^2_2(\bm{\vartheta})) = 0.
\end{multline}

Similar to the general case above, we illustrate the stability properties of the $1$-splay state depending on $\gamma$, $\sigma$, $\alpha$, and $R_2$ that characterizes the splay manifold with respect to the linear stability. For this, note first that the parameters $\gamma$ and $\sigma$ can be reduced to one parameter $\gamma'=\sqrt{\sigma}\gamma$ with respect to the stability of the $1$-splay state. In particular, the mapping $\gamma\mapsto \sqrt{\sigma}\gamma$ and $\mu\mapsto\sqrt{\sigma}\mu$ leaves the stability features invariant but renders \eqref{eq:KwISplayStabQuartic} independent of $\sigma$. Hence, we may consider \eqref{eq:KwISplayStabQuartic} for $\sigma=1$ without loss of generality. 

As in previous cases, we look for the boundary of the stability region by substituting $\mu = \mathrm{i}v$ into \eqref{eq:KwISplayStabQuartic}:
\begin{multline}\label{eq:KwISurfaceEq}
	v^4 - (\gamma^2-\cos\alpha)v^2 +\frac{1}{4}(1-R^2_2(\bm{\vartheta}))\\
	- \mathrm{i}\gamma v \left(2v^2+\cos\alpha\right) = 0.
\end{multline}
The obtained equation is solved when either one of the following conditions is fulfilled:\\
(i) $v=0$ and $R^2_2(\bm{\vartheta})=1$ for all $\gamma>0$,
\\ (ii) $-2v^2=\cos\alpha$ and $\sin^2 \alpha - R^2_2(\bm{\vartheta}) + 2\gamma \cos\alpha=0$ for all $v\in\mathbb{R}$. \\
The first condition corresponds to a singular point on the splay manifold with $R_2=1$ and is not of general relevance. 
Hence, in Fig.~\ref{fig:R2AlphaDiagramKwI}(a), we display the second condition as a surface in $(\alpha,R^2_2(\bm{\vartheta}),\gamma)$-space. In  panels (b)-(e) of Fig.~\ref{fig:R2AlphaDiagramKwI}, we depict two-dimensional cross-section with fixed values of $\gamma$,where the stable regions are indicated by shading. We note that the area corresponding to stable dynamics increases for increasing $\gamma$. Moreover, we observe that for any $\gamma$ the stability intervals in $\alpha$ decrease from $[-\pi/2,\pi/2]$ for $R_2=1$ to a smaller interval for $R_2=0$, respectively.
\begin{figure}
	\centering
	\includegraphics[width=\columnwidth]{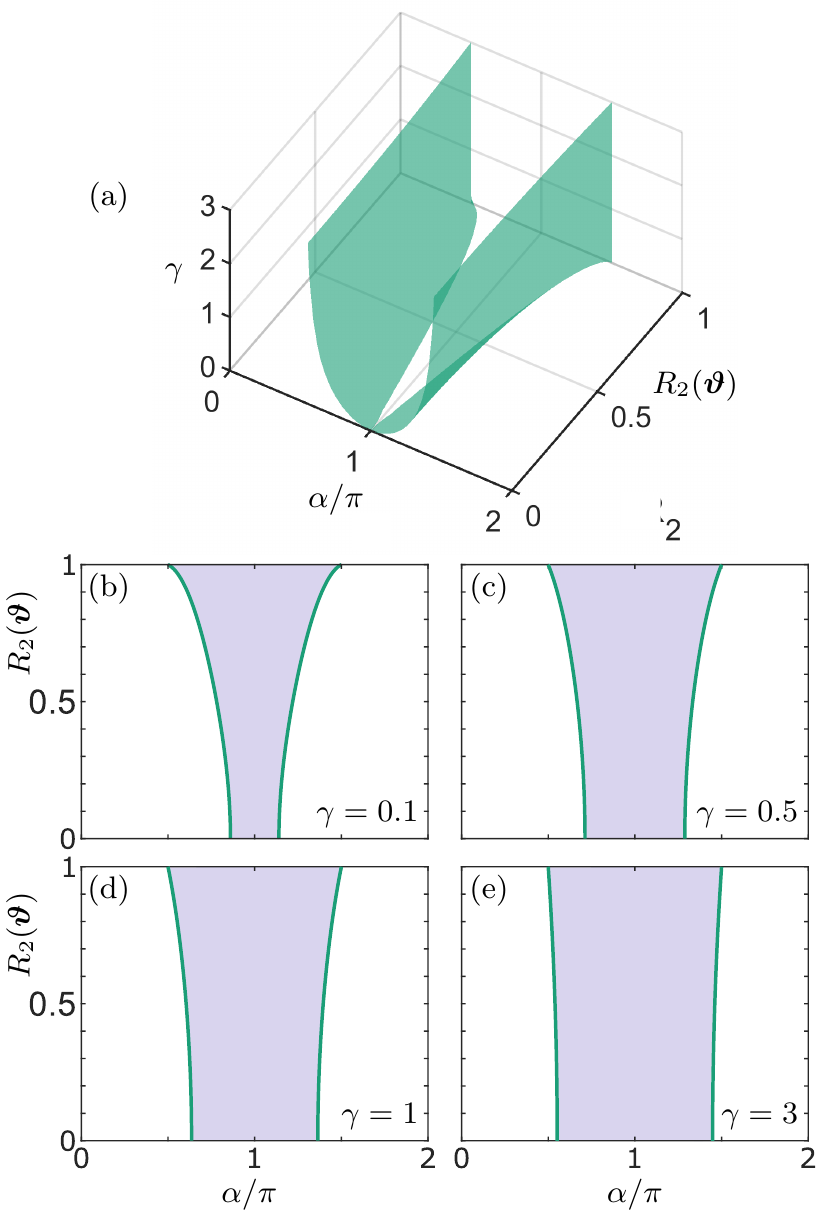}
	\caption{
		\label{fig:R2AlphaDiagramKwI}
		Phase diagram showing the local stability of the $1$-splay states for system~\eqref{eq:KwI_2order} in dependence of the phase-lag parameter $\alpha$, the second moment of the order parameter $R_2(\bm{\vartheta})$, and damping  $\gamma$. In panel (a), the surface separating stable from unstable regimes is depicted in green. The surface corresponds to condition (ii) for Eq.~\eqref{eq:KwISurfaceEq}. In panels (b)-(e), sections for fixed values of $\gamma=0.1,0.5,1,3$ are depicted, respectively. Stable regions are shaded.}
\end{figure}

In Figure~\ref{fig:1splays23456}, we illustrate different $1$-splay states and their corresponding second order parameter $R_2$.
\begin{figure}
	\centering
	\includegraphics{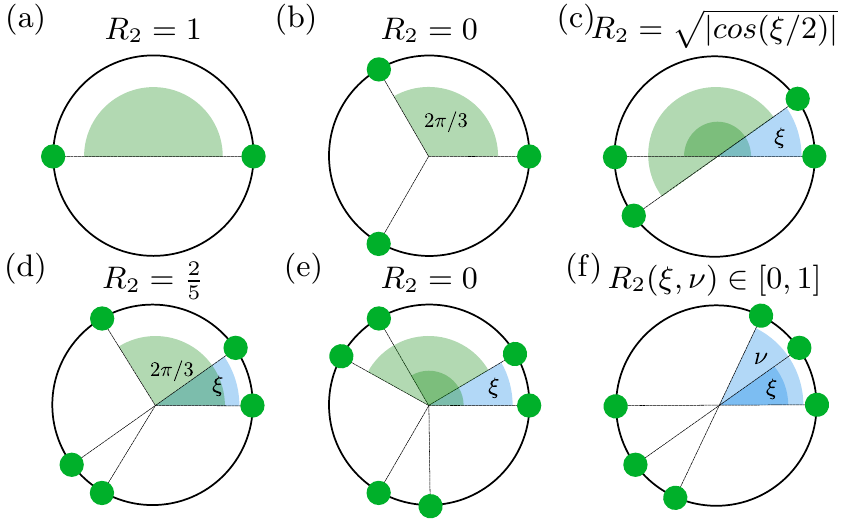}
	\caption{
		\label{fig:1splays23456}
		Illustrations of $1$-splay states ($R_1=0$) for system sizes of (a) $N=2$, (b) $N=3$, (c) $N=4$, (d) $N=5$ and (e),(f) $N=6$. The phases of each phase oscillator are represented on the unit circle by $\vartheta_i\mapsto \exp(\mathrm{i}\vartheta_i)$. The green and blues angles depict fixed and parametrized (variable) phase relations, respectively.}
\end{figure}

\section{Adaptive phase oscillator models\label{sec:POadap}}

In this section, we study the linear stability of generalized splay states in the following general class of coupled phase oscillators with adaptation
\begin{align}\label{eq:POAdapivePhi}
	\frac{\mathrm{d}}{\mathrm{d}t}\bm{\phi} & = \omega \bm{1} + F(\bm{\phi},\bm{\kappa}),\\
	\frac{\mathrm{d}}{\mathrm{d}t}\bm{\kappa} & = -\epsilon\bm{\kappa} + G(\bm{\phi}),
	\label{eq:POAdapiveKappa}
\end{align}
where $\omega$ is the common natural frequency of the phase oscillators,  $F(\bm{\phi},\bm{\kappa})=(f_1(\bm{\phi},\bm{\kappa}),\dots,f_N(\bm{\phi},\bm{\kappa}))^T$ is the coupling vector field with coupling functions $f_i$. The adaptivity variables are given by $\bm{\kappa}= (\kappa_1,\dots,\kappa_K)^T\in \mathbb{R}^K$. Their dynamics is determined by the  dissipation parameter $\epsilon$ and the adaptation vector field $G(\bm{\phi})=(g_1(\bm{\phi}),\dots,g_K(\bm{\phi}))^T$ and adaptation functions $g_l$, $l=1,\dots,K$.

Due to the existence of adaptivity variables $\bm{\kappa}$, we have to generalize the definition of a splay state. A phase-locked state with $\phi_i(t) = \Omega t + \vartheta_i$ and $\kappa_l(\bm{\vartheta})=g_l(\bm{\vartheta})/\epsilon$ ($l=1,\dots,K$) is said to form a generalized $m$-splay state if $R_m(\bm{\vartheta})=0$. In order guarantee the existence of $m$-splay states, we extent the Hypothesis~\ref{hyp:mSplayExists} accordingly. In particular, we assume 
\begin{hypothesis}\label{hyp:mSplayAdapt}
	For all $\bm{\vartheta}\in SM_m$,
	system of coupled phase oscillators 
	\eqref{eq:POAdapivePhi}--\eqref{eq:POAdapiveKappa}
	 possesses $m$-splay states $\bm{\phi}(t)=\Omega t +\bm{\vartheta}$, $\kappa_l(\bm{\vartheta})=g_l(\bm{\vartheta})/\epsilon$
	 with collective frequency $\Omega$.
\end{hypothesis}
Additionally, we assume the phase-shift symmetry.
\begin{hypothesis}\label{hyp:phaseinv-adapt}
	For any $\psi\in \mathbb R$, the nonlinearities $F$ and $G$ satisfy
	$F(\bm{\phi}+\psi\bm{1},\bm{\kappa}) = F(\bm{\phi},\bm{\kappa})$ and $G(\bm{\phi}+\psi\bm{1}) = G(\bm{\phi})$. This implies that the corresponding system \eqref{eq:POAdapivePhi}--\eqref{eq:POAdapiveKappa} is equivariant with respect to the phase-shift transformation. 
\end{hypothesis}

\subsection*{Stability of $m$-splay states}
To study the linear stability of $m$-splay states, we consider the variational equation of~\eqref{eq:POAdapivePhi}--\eqref{eq:POAdapiveKappa} around an arbitrary $m$-splay state, which reads
\begin{align}\label{eq:POAdaptive_variEq}
	\frac{\mathrm{d}}{\mathrm{d}t}
	\begin{pmatrix}
		\delta\bm{\phi} \\
		\delta\bm{\kappa}
	\end{pmatrix} =
	\begin{pmatrix}
		L & B \\
		C & -\epsilon\mathbb{I}_M
	\end{pmatrix}
	\begin{pmatrix}
		\delta\bm{\phi} \\
		\delta\bm{\kappa}
	\end{pmatrix}
	= J
	\begin{pmatrix}
		\delta\bm{\phi} \\
		\delta\bm{\kappa}
	\end{pmatrix},
\end{align}
where $J=J(\bm{\vartheta})$ denotes the Jacobian. 
Due to the phase-shift symmetry,   $(\delta\bm{\phi}^T,\delta\bm{\kappa}^T) = (\bm{1}^T,0,\dots,0)$ is an eigenvector of $J$ with zero eigenvalue and hence $L\bm{1} = 0$.
The entries of the $N\times K$ matrix $B=B(\bm{\vartheta})$, the $K\times K$ matrix $C=C(\bm{\vartheta})$, and the $N\times N$ matrix $L=L(\bm{\vartheta})$ are given as follows.  The nondiagonal entries of $L$ are given by
\begin{align}\label{eq:splayLAdaptiveNonDiag}
	l_{ij} = \frac{\partial f_i}{\partial \phi_j}(\bm{\vartheta},\bm{\kappa}(\bm{\vartheta})).
\end{align}
The diagonal elements $l_{ii}$ are given such that the row sums vanish, i.e., $\sum_{j=1}^N l_{ij}=0$ for all $i=1,\dots,N$. We have
\begin{align}\label{eq:splayLAdaptiveDiag}
	l_{ii}= -\sum_{j=1,j\ne i}^N\frac{\partial f_i}{\partial \phi_j}(\bm{\vartheta},\bm{\kappa}(\bm{\vartheta})).
\end{align}
The entries of $B$ and $C$ are given by
\begin{align}\label{eq:splayBAdaptive}
	b_{il} = \frac{\partial f_i}{\partial \kappa_l}(\bm{\vartheta},\bm{\kappa}(\bm{\vartheta}))
\end{align}
and
\begin{align}\label{eq:splayCAdaptive}
	c_{li} = \frac{\partial g_l}{\partial \phi_i}(\bm{\vartheta}),
\end{align}
respectively.

In order to understand the linear stability of the phase-locked states, we have to determine the eigenvalues of the Jacobian matrix $J$. The following Lemma provides a useful tool to find these eigenvalues. In the following, we use the superscript $H$ to indicate the Hermitian conjugate.
\begin{lem}~\label{lem:POAdaptiveSchurRed}
	Let $L$, $B$, and $C$ be any complex $N\times N$, $N\times K$ and $K\times N$ matrices, respectively. For the $N+K$ eigenvalues of the matrix $J=\begin{pmatrix}
		L & B \\
		C & -\epsilon\mathbb{I}_K
	\end{pmatrix}$, the following statements hold true.\\
	(i) If $K < N$, the eigenvalues of $J$ are given by the solutions of
	\begin{align*}
		(\epsilon+\mu)^{K-N}
		\det\begin{pmatrix}
			(\mu+\epsilon) \left(\mu\mathbb{I}_N-L\right) - BC
		\end{pmatrix} = 0.
	\end{align*}	
	(ii) If $K \ge N$, $J$ possesses $K-N$ eigenvalues $-\epsilon$. The $2N$ remaining eigenvalues are given by 
	\begin{align*}
		\det\begin{pmatrix}
			(\mu+\epsilon) \left(\mu\mathbb{I}_N-L\right) - BC
		\end{pmatrix} = 0.
	\end{align*}
\end{lem}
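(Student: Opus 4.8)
The plan is to obtain the eigenvalues of $J$ from a Schur‑complement factorization of its characteristic polynomial. I would start from
\[
	\det\!\left(J-\mu\mathbb{I}_{N+K}\right)
	=\det\begin{pmatrix} L-\mu\mathbb{I}_N & B\\ C & -(\epsilon+\mu)\mathbb{I}_K\end{pmatrix}.
\]
Whenever $\mu\neq-\epsilon$ the lower-right block $-(\epsilon+\mu)\mathbb{I}_K$ is invertible, so the standard block-determinant identity $\det\begin{pmatrix}A&B\\C&D\end{pmatrix}=\det(D)\det(A-BD^{-1}C)$ applies and gives
\[
	\det\!\left(J-\mu\mathbb{I}_{N+K}\right)
	=(-(\epsilon+\mu))^{K}\,\det\!\left(L-\mu\mathbb{I}_N+\tfrac{1}{\epsilon+\mu}BC\right).
\]

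Next I would clear the denominator inside the $N\times N$ determinant. Since $L-\mu\mathbb{I}_N+\tfrac{1}{\epsilon+\mu}BC=\tfrac{1}{\epsilon+\mu}\big[(\epsilon+\mu)(L-\mu\mathbb{I}_N)+BC\big]$, pulling $\tfrac{1}{\epsilon+\mu}$ out of the $N$ rows and extracting a sign $(-1)^N$ to rewrite the bracket as $(\mu+\epsilon)(\mu\mathbb{I}_N-L)-BC$ yields
\[
	\det\!\left(J-\mu\mathbb{I}_{N+K}\right)
	=(-1)^{N+K}(\epsilon+\mu)^{K-N}\,\det\!\big((\mu+\epsilon)(\mu\mathbb{I}_N-L)-BC\big),
\]
an identity valid for all $\mu\neq-\epsilon$. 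Writing $M(\mu):=(\mu+\epsilon)(\mu\mathbb{I}_N-L)-BC$, its entries are polynomials in $\mu$ of degree at most two, and $\det M(\mu)$ is a polynomial of degree exactly $2N$ with leading term $\mu^{2N}$, the top contribution coming only from the product of the diagonal entries $(\mu+\epsilon)(\mu-l_{ii})$.

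To conclude I would promote this punctured identity to a polynomial identity. In case (ii), $K\ge N$, both sides are polynomials of degree $N+K$ agreeing on the dense set $\{\mu\neq-\epsilon\}$, hence identically; this exhibits the characteristic polynomial, up to the nonzero constant $(-1)^{N+K}$, as $(\epsilon+\mu)^{K-N}\det M(\mu)$, so $J$ has the eigenvalue $-\epsilon$ with multiplicity at least $K-N$ and $2N$ further eigenvalues, namely the roots of $\det M(\mu)=0$. In case (i), $K<N$, the left-hand side is a polynomial while $(\epsilon+\mu)^{K-N}$ is not, so $(\epsilon+\mu)^{N-K}$ must divide $\det M(\mu)$ (using that $\epsilon+\mu$ is irreducible in $\mathbb{C}[\mu]$); hence $(\epsilon+\mu)^{K-N}\det M(\mu)$ is a genuine polynomial equal to $(-1)^{N+K}\det(J-\mu\mathbb{I}_{N+K})$, whose roots are the eigenvalues of $J$.

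The only delicate point — the main obstacle — is the bookkeeping at $\mu=-\epsilon$: the Schur reduction is licensed only away from that point, and in case (i) one must know that the right-hand side really is a polynomial, equivalently that $(\epsilon+\mu)^{N-K}$ divides $\det M(\mu)$, which reflects $\mathrm{rank}\,M(-\epsilon)=\mathrm{rank}(BC)\le K<N$. The polynomial-identity argument handles this without any explicit multiplicity computation; as a cross-check one could instead apply the Schur complement with respect to the top-left block $L-\mu\mathbb{I}_N$ on the dense set where that block is invertible.
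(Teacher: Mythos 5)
Your proof is correct and takes essentially the same route as the paper: a Schur-complement factorization of $\det\left(J-\mu\mathbb{I}_{N+K}\right)$ with respect to the lower-right block, followed by pulling the factor $(\epsilon+\mu)^{-1}$ out of the $N$ rows to arrive at $(\epsilon+\mu)^{K-N}\det\left((\mu+\epsilon)(\mu\mathbb{I}_N-L)-BC\right)$. The only (minor) difference is the treatment of case (i): the paper shows directly that the matrix $(\mu+\epsilon)(\mu\mathbb{I}_N-L)-BC$ evaluated at $\mu=-\epsilon$ equals $-BC$ and is rank deficient via $\ker C$, so its determinant has at least $N-K$ roots at $-\epsilon$, whereas you reach the same divisibility by promoting the punctured identity to a polynomial identity — both arguments are valid, and yours is arguably more careful about the removable singularity at $\mu=-\epsilon$ and about the sign bookkeeping.
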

\begin{proof}
	Using the Schur complement~\cite{BOY04}, we write
	\begin{multline*}
		\det\begin{pmatrix}
			L-\mu\mathbb{I}_N & B \\
			C & -(\epsilon+\mu)\mathbb{I}_K
		\end{pmatrix} = \\
		-(\epsilon+\mu)^K
		\det\begin{pmatrix}
			\left(L-\mu\mathbb{I}_N\right) +(\epsilon+\mu)^{-1} BC
		\end{pmatrix}
		= \\
		-(\epsilon+\mu)^{K-N}
		\det\begin{pmatrix}
			(\mu+\epsilon) \left(\mu\mathbb{I}_N-L\right) - BC
		\end{pmatrix}.
	\end{multline*}
	(i) Suppose $K<N$, then the $K\times N$ matrix $C$ has at least an $N-K$-dimensional kernel. Hence, there exists an $N-K$ dimensional vector space $V$ such that for all $v\in V$
	\begin{align*}
		\begin{pmatrix}
			(\mu+\epsilon) \left(\mu\mathbb{I}_N-L\right) - BC
		\end{pmatrix}v = 0
	\end{align*}
	for $\mu=-\epsilon$. Therefore, the polynomial
	\begin{align*}
		\det\begin{pmatrix}
			(\mu+\epsilon) \left(\mu\mathbb{I}_N-L\right) - BC
		\end{pmatrix} = 0
	\end{align*} possesses at least $N-K$ roots $-\epsilon$.\\
	(ii) Suppose $K\ge N$, we find 
	\begin{multline*}
		\det\left(J-\mu\mathbb{I}_{N+K}\right)=\\-(\epsilon+\lambda)^{K-N}
		\det\begin{pmatrix}
			(\mu+\epsilon) \left(\mu\mathbb{I}_N-L\right) - BC
		\end{pmatrix}
	\end{multline*}
	and $J$ possesses $K-N$ eigenvalues $-\epsilon$. All other eigenvalues are given by the coupled set of $N$ quadratic equations $\det\begin{pmatrix}
		(\mu+\epsilon) \left(\mu\mathbb{I}_N-L\right) - BC
	\end{pmatrix}=0$.
\end{proof}

With this Lemma and Lemma~\ref{lem:splayMatrixCharPolGeneral}, the following conditions for the local stability of the $m$-splay states are derived. Note that $L$ and $BC$ do not necessarily commute.
\begin{prop}\label{prop:splayStateLinStability_POAdaptivemodel}
	Suppose $J$ is the Jacobian of~\eqref{eq:POAdaptive_variEq} and $L$, $B$, and $C$ possess the entries as given in~\eqref{eq:splayLAdaptiveNonDiag}--\eqref{eq:splayLAdaptiveDiag},~\eqref{eq:splayBAdaptive}, and~\eqref{eq:splayCAdaptive}, respectively, where $(\bm{\vartheta},\bm{\kappa}(\bm{\vartheta}))$ corresponds to an $m$-splay state which solves~\eqref{eq:POAdapivePhi}--\eqref{eq:POAdapiveKappa}. Let us further write $\tilde{L}=BC$. Then the $m$-splay state is linearly stable if and only if $\epsilon>0$ and for all solutions $\mu_{1,2,3,4}$ of the quartic equation
	\begin{widetext}
		\begin{multline}\label{eq:POAdaptive_SplayEigenQuartic}
			\mu^4 + \left(2\epsilon-\Tr(L)\right) \mu^3 + \left(\epsilon^2 - 2\epsilon\Tr(L) +\frac{\Tr(L)^2-\Tr(L^2)}{2}-\Tr(\tilde{L})\right)\mu^2 \\
			+\left(\Tr(L)\Tr(\tilde{L})-\Tr(L\tilde{L}) + \epsilon(\Tr(L)^2-Tr(L^2)-\Tr(\tilde{L}))-\epsilon^2\Tr(L)\right)\mu \\
			+ \frac{\Tr(\tilde{L})^2-\Tr(\tilde{L}^2)+2\epsilon(\Tr(L)\Tr(\tilde{L})-\Tr(L\tilde{L}))+\epsilon^2(\Tr(L)^2-\Tr(L^2))}{2}=0,
		\end{multline}
	\end{widetext}
	we have $\mathrm{Re}(\mu_{1,2,3,4})<0$.
\end{prop}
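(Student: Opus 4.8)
The plan is to mimic the two--step route used for Proposition~\ref{prop:splayStateLinStability_POwImodel}: first peel off, via the Schur reduction of Lemma~\ref{lem:POAdaptiveSchurRed}, all eigenvalues of $J$ that cannot cause instability, and then reduce the remaining spectral problem to a $2\times2$ determinant that is evaluated with the trace identity of Lemma~\ref{lem:splayMatrixCharPolGeneral}. Concretely, Lemma~\ref{lem:POAdaptiveSchurRed} shows that, apart from eigenvalues equal to $-\epsilon$ (which lie in the open left half--plane exactly when $\epsilon>0$), the spectrum of $J$ is determined by the $2N$ roots of $\det M(\mu)=0$ with
\begin{equation*}
	M(\mu)=(\mu+\epsilon)\bigl(\mu\mathbb I_N-L\bigr)-\tilde L,\qquad \tilde L=BC.
\end{equation*}

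Next I would identify an $(N-2)$--dimensional subspace annihilated by both $L$ and $\tilde L$. Exactly as in the proof of Proposition~\ref{prop:splayStateLinStability}, the perturbations tangent to the $m$--splay manifold span $W=\{\delta\bm\phi:\ \sum_j e^{\mathrm{i}m\vartheta_j}\delta\phi_j=0\}$, a subspace of dimension $N-2$ containing $\bm1$, on which $L$ vanishes; differentiating the defining relation $F(\bm\vartheta,\bm\kappa(\bm\vartheta))=(\Omega-\omega)\bm1$ along $SM_m$ (this is where Hypotheses~\ref{hyp:mSplayAdapt} and~\ref{hyp:phaseinv-adapt} enter) and using $\partial\bm\kappa/\partial\bm\vartheta=\epsilon^{-1}C$ gives $\bigl(L+\epsilon^{-1}\tilde L\bigr)\delta\bm\phi=0$ on $W$, hence $\tilde L$ vanishes on $W$ as well. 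In a basis adapted to $\mathbb C^{N}=W\oplus U$ with $\dim U=2$, both $L$ and $\tilde L$ are then block upper triangular with vanishing first block--column, so $M(\mu)$ is block upper triangular with $M_{11}(\mu)=\mu(\mu+\epsilon)\mathbb I_{N-2}$, and therefore
\begin{equation*}
	\det M(\mu)=\bigl(\mu(\mu+\epsilon)\bigr)^{N-2}\det\bigl[(\mu+\epsilon)(\mu\mathbb I_2-L_{UU})-\tilde L_{UU}\bigr].
\end{equation*}

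Then I would expand the remaining factor, the determinant of a $2\times2$ matrix $A(\mu)$, via $\det A=\tfrac12\bigl((\Tr A)^2-\Tr(A^2)\bigr)$ (Lemma~\ref{lem:splayMatrixCharPolGeneral} in dimension two). Since the $W$--blocks of $L$ and $\tilde L$ vanish, $\Tr L_{UU}=\Tr L$, $\Tr L_{UU}^2=\Tr L^2$, $\Tr\tilde L_{UU}=\Tr\tilde L$, $\Tr\tilde L_{UU}^2=\Tr\tilde L^2$ and $\Tr(L_{UU}\tilde L_{UU})=\Tr(L\tilde L)$, so every trace needed is read directly off $L$ and $\tilde L=BC$; substituting $\mu+\epsilon$ and $\mu(\mu+\epsilon)$ and collecting powers of $\mu$ reproduces~\eqref{eq:POAdaptive_SplayEigenQuartic}. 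Combining with the Schur reduction, the $N+K$ eigenvalues of $J$ are $0$ with multiplicity $N-2$ (neutral directions along the splay manifold), $-\epsilon$ with multiplicity $K-2$, and the four roots $\mu_{1,2,3,4}$ of~\eqref{eq:POAdaptive_SplayEigenQuartic}, so linear stability is equivalent to $\epsilon>0$ and $\mathrm{Re}(\mu_{1,2,3,4})<0$.

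The main obstacle is the step that shows $L$ and $\tilde L=BC$ are simultaneously annihilating on one common $(N-2)$--dimensional subspace: because $L$ and $BC$ need not commute one cannot diagonalize them jointly, and the argument must instead go through the shared invariant splitting $W\oplus U$ supplied by the splay structure --- this non--commutativity is exactly why the coefficient of $\mu$ in~\eqref{eq:POAdaptive_SplayEigenQuartic} carries the cross term $\Tr(L\tilde L)$ and not merely $\Tr(L)\Tr(\tilde L)$. Once the common kernel is in place, the rest is a routine algebraic expansion keeping track of the $\epsilon$--dependent coefficients.
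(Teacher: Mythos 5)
Your proposal is correct and follows essentially the same route as the paper: the Schur-complement reduction of Lemma~\ref{lem:POAdaptiveSchurRed}, the key fact that the splay tangent directions $\{\delta\bm{\phi}:\sum_j e^{\mathrm{i}m\vartheta_j}\delta\phi_j=0\}$ are annihilated by both $L$ and $\tilde{L}=BC$ (asserted on the same footing as the paper's claim $J(\delta\bm{\phi}^T,0,\dots,0)^T=0$), and a trace-based evaluation yielding the quartic~\eqref{eq:POAdaptive_SplayEigenQuartic} together with the residual eigenvalues $0$ and $-\epsilon$. Your block-triangularization of $(\mu+\epsilon)(\mu\mathbb{I}_N-L)-\tilde{L}$ with respect to the splitting of $\mathbb{C}^N$ into that common kernel and a two-dimensional complement, followed by the $2\times2$ determinant identity, is only a cosmetic variant of the paper's step of applying Lemma~\ref{lem:splayMatrixCharPolGeneral} to $\bar{L}(\mu)=(\mu+\epsilon)L+BC$ and passing to its Schur triangular form.
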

The proof of this Proposition is given in Appendix~\ref{sec:SplayStabPOAdaptive}. With this result, the stability of an $m$-splay state depends on the traces of $L$, $L^2$, $\tilde{L}=BC$, $\tilde{L}^2$, and $L\tilde{L}$ explicitly. We note that, as it has been shown in Ref.~\onlinecite{BER21a}, the phase oscillator models with inertia are a subclass of phase oscillator models with adaptivity. In particular, considering $L=0$  in~\eqref{eq:POAdaptive_SplayEigenQuartic} completely resembles the finding for phase oscillator models with inertia in~\eqref{eq:POwISplayStabQuartic}.

\section{Geometric perspective}\label{sec:geometrical}

This short section aims at giving a qualitative geometric view on the obtained results. In fact, the $m$-splay states are a particular class of incoherent states satisfying a special but important condition $Z_m=0$, i.e., the $m$th order parameter vanishes. Due to surprisingly frequently arising symmetries or special coupling configurations in dynamical networks (Kuramoto-Sakaguchi for example), these states appear and  form high-dimensional manifolds $SM_m$, with the dimension $D$ =  (dimension of the phase space) - 2, i.e., the number of real-valued conditions from $Z_m=0$. Due to the high dimensionality, such states and their stable/unstable manifolds play a crucial role in the global dynamics.

Here we show that the manifold $SM_m$ of the splay states is foliated by the two parameters $\mathrm{Tr}\,(L)$ and $\mathrm{Tr}\,(L^2)$, such that each $(D-2)$-dimensional sheet of this foliation has the same local stability properties. One part of this foliation can be stable, another part is unstable, and the corresponding eigenvalues are given explicitly. Moving along the manifold (changing $\mathrm{Tr}\,(L)$ and $\mathrm{Tr}\,(L^2)$) one can observe classical local bifurcations. 

Our generalizations on phase oscillators with inertia or with adaptation show that the above general geometric picture is preserved, but with different dimensions and some more parameters of the foliation. For adaptive networks, for example, the foliation parameters are the traces of $L$, $L^2$, $\tilde{L}=BC$, $\tilde{L}^2$, and $L\tilde{L}$.

\section{Conclusions}\label{sec:conclusion}

In this article, we have introduced generalized $m$-splay states as a concept for incoherent phase-locked states in ensembles of a finite number of phase oscillators. We have provided a description of their shape and illustrated these states for a small number of oscillators in Section~\ref{sec:model}. Additionally, we have considered each splay state as part of an $N-2$ dimensional manifold called the splay manifold. In Section~\ref{sec:1Cl_stab}, we have described the local dynamical properties of generalized $m$-splay states and have given explicit stability conditions for their stability. Here, we have identified two specific properties of the Jacobian matrix $L$ to be of relevance for the stability. In particular, we have shown that the traces of $L$ and $L^2$ describe the stability for any splay state. 

In order to illustrate these abstract results from Section~\ref{sec:splayStability}, we have applied our findings in Section~\ref{sec:KSmodel} to the Kuramoto-Sakaguchi model that possesses $1$-splay states. We have found that the stability for all splay states is determined by the phase lag parameter $\alpha$ alone. However, it is notable that the local dynamics around each $1$-splay state is determined by the second moment of the order parameter $R_2$. Depending on $R_2$, a $1$-splay state is either a node or a focus.

In Section~\ref{sec:POwI}, the results have been transferred to phase oscillator models with inertia. In Section~\ref{sec:POwISplay}, we have generalized the findings for the stability of generalized splay states and have demonstrated the stability in dependence on the damping constant $\gamma$ and the traces of $L$ and $L^2$. We have further described analytically the two-dimensional surfaces that separate stable regions from unstable regions in $(\Tr(L),\Tr(L^2),\gamma)$-space. As before, we have applied the general results to a specific model. Here, we have considered the Kuramoto-Sakaguchi model with inertia which possesses $1$-splay states. Due to our previous findings, we have derived the shape of the two-dimensional surface explicitly that separates stable from unstable regions in $(\alpha,R_2,\gamma)$-space. In contrast to the pure Kuramoto-Sakaguchi model, the stability of the $1$-splay states depends explicitly on $R_2$ for the model with inertia. Thus, the splay manifold consists of stable and unstable regions. The phase oscillator model with inertia that has been considered in this article can also be interpreted as a phase oscillator model with adaptivity~\cite{BER21a}.

As the last part of this article, we have shown the generic stability condition of any $m$-splay state for a very generic class of adaptive phase oscillator models. Here, we have observed that the stability is not determined by the traces of $L$ and $L^2$ alone. It turns out that yet another Laplacian matrix $\tilde{L}$ describing the interaction of the phases with the adaptive variables is needed to understand the stability properties. Hence, the bifurcation scenarios can be more complex.

In summary, in this article we have developed a general framework to study the local dynamical features of generalized splay states. These states generalize certain concepts of incoherent states as they have been studied previously~\cite{STR91}. In contrast to Ref.~\onlinecite{STR91}, the findings in this article are valid for ensembles of finite size as well. For the particular class of Kuramo-Sakaguchi models, we have also pinpointed the important characteristics that describe the local dynamics transverse to the splay manifold even beyond pure phase oscillator models. Due to the intimate relation between partial integrability and the splay manifold as proposed by the Watanabe-Strogatz approach~\cite{WAT94}, we believe that the present findings provide important insights for future development of generalized dimension reduction techniques.

In the field of chimera states, splay states play an important role for both transient and asymptotic dynamics. Multiple coexistence of splay states with chimera states gives typically rise to riddled and intermingled basins of attraction causing the extreme sensitivity and unpredictability of the global network dynamics\cite{BRE21}. 
 
Another field for application of our results lies in the research on epileptic seizures. It was shown that a drop of the degree of synchronization may occur just before the onset of a seizure~\cite{MOR03a,AND16,GER20}. In particular this drop of synchronization, where the order parameter tends to zero, hints at the dynamical importance of splay states (incoherence) for the emergence of seizures. Moreover, modern approaches to treat tinnitus~\cite{TAS12,TAS12a} and Parkinson's disease~\cite{TAS03a} make active use of incoherent states. In this regard our findings may offer new insights since these methods essentially rely on the stability of incoherent states.

\begin{acknowledgments}
	We dedicate this paper to the memory of Vadim S. Anishchenko. This work was supported by the German Research Foundation DFG, Project Nos. 411803875 and 440145547. 
\end{acknowledgments}

\section*{Data Availability Statement}
The data that supports the findings of this study are available within the article.
\appendix
\section{Proof of Lemma~\ref{lem:splayMatrixCharPolGeneral}\label{sec:proofSplayLem}}
	Let us prove the result by complete induction. Consider the case $N=2$. By direct calculation we find that the statement of the Lemma holds true. Now assume the result holds for any $N$. Consider the characteristic polynomial of the following $(N+1)\times(N+1)$ matrix and assume that it possesses $N-1$ roots at zero:
	\begin{align*}
		L_{(N+1)(N+1)} = \begin{pmatrix}
			l_{(N+1)(N+1)} & \begin{matrix}
				l_{(N+1)1} & \cdots & l_{(N+1)N}
			\end{matrix} \\
			\begin{matrix}
				l_{1(N+1)} \\
				\vdots \\
				l_{N(N+1)}
			\end{matrix} &
			L_N 
		\end{pmatrix},
	\end{align*}
	where
	\begin{align*}
		L_N = \begin{pmatrix}
			l_{11} & \cdots & l_{1N} \\
			\vdots & \ddots & \vdots \\
			l_{N1} & \cdots & l_{NN}
		\end{pmatrix}.
	\end{align*}
	We use the Laplace expansion of $\det(L_{(N+1)(N+1)}-\lambda\mathbb{I}_{(N+1)})$ with respect to the first column. We get
	\begin{multline*}
		\det(L_{(N+1)(N+1)}-\lambda\mathbb{I}_{(N+1)}) = \\
		\left(l_{(N+1)(N+1)}-\lambda\right)\det\left(L_N-\lambda \mathbb{I}_N\right) + \\
		+\sum_{i=1}^N(-1)^i l_{i(N+1)}\det \hat{L}_{N,i}
	\end{multline*}
	with $\hat{L}_{N,i}$ given by
		\begin{multline*}
\begin{pmatrix}
				l_{(N+1)1} & l_{(N+1)2} & \cdots & \cdots & \cdots & l_{(N+1)N} \\
				l_{11}-\lambda & l_{12} & \cdots & \cdots & \cdots & l_{1N}  \\
				\vdots &  & \ddots  & & & \vdots\\
				l_{(i-1)1} & \cdots & \cdots & l_{(i-1)(i-1)}-\lambda & \cdots  & l_{(i-1)N} \\
				l_{(i+1)1} & \cdots & \cdots & l_{(i+1)(i+1)}-\lambda & \cdots & l_{(i+1)N} \\
				\vdots & & & & \ddots & \vdots \\ 
				l_{N1} & \cdots & \cdots & \cdots & \cdots & l_{NN}-\lambda
			\end{pmatrix}.
		\end{multline*}
	Remember, by assumption, while evaluating the characteristic polynomial $p_{(N+1)}(\lambda)$ of $L_{(N+1)(N+1)}$, we only have to consider contributions to the coefficients $a_{N+1}$ and $a_{N}$. Note that $\det \hat{L}_{N,i}$ is already a polynomial of degree $N$, thus it can contribute to $a_{N}$ only. Consider an additional Laplacian expansion of $\det \hat{L}_{N,i}$ with respect to the first row. Let $(\hat{L}_{N,i})_i$ be the matrix where we cut off the first row and the $i$th column of $\hat{L}_{N,i}$. We find that the term $-(-1)^i l_{({N+1})i}\det\hat{L}_{N,i,i}$ of the Laplacian expansion of $\det \hat{L}_{N,i}$ contributes to a polynomial of degree $N$. Any other term results in a polynomial in $\lambda$ with degree lower than $N$. Apply the induction ansatz that the statement in the Lemma holds for any $K\le N$, we find
	\begin{multline*}
		\det(L_{(N+1)(N+1)}-\lambda\mathbb{I}_{(N+1)}) = \\ -(-1)^N\lambda^{(N-1)}\left(\lambda^2+\bar{a}_{(N-1)}\lambda+\bar{a}_{(N-2)}\right) + \\
		+ (-1)^Nl_{(N+1)(N+1)}\lambda^{(N-1)}\left(\lambda+\bar{a}_{(N-1)}\right) - \\
		(-1)^{N-1}\sum_{i=1}^N l_{i(N+1)} l_{({N+1})i}\lambda^{N-1}
	\end{multline*}
	where $\bar{a}_k$ are the coefficients of $p(L_N,\lambda)$. Reorganizing the last equation yields the proof.

\section{Proof of Proposition~\ref{prop:splayStateLinStability_POAdaptivemodel}\label{sec:SplayStabPOAdaptive}}	
Due to Lemma~\ref{lem:POAdaptiveSchurRed}, the eigenvalues of the Jacobian $J$ are determined by the solutions of 
\begin{align*}
	\det\begin{pmatrix}
		(\mu+\epsilon) \left(\mu\mathbb{I}_N-L\right) - BC
	\end{pmatrix} = 0.
\end{align*}
By assumption, the $m$-splay states form an ($N-2$)-dimensional manifold $SM_m$. Consider the $N-2$ perturbation directions along this family $\delta\hat{\bm{\phi}}=(\delta\bm{\phi}^T,0,\dots,0)^T$, where the components are given by $\sum_{j=1}^N e^{\mathrm{i}m\vartheta_j}\delta\phi_j = 0$. From $J\delta\hat{\bm{\phi}}=0$, we get 
$L\delta\bm{\phi}=0$ and $BC\delta\bm{\phi}=B0=0$. Hence,
 the matrix $\bar L(\mu)=(\mu+\epsilon)L + BC $ needs to have at least $N-2$ zero eigenvalues for any $\mu$ and thus~Lemma~\ref{lem:splayMatrixCharPolGeneral} applies. The eigenvalues of $\bar L$ are $0$ with algebraic multiplicity $N-2$ and the solutions of $\lambda^2+a_{(N-1)}\lambda+a_{(N-2)}=0$
where the coefficients read
\begin{align*}
	a_{(N-1)} &= -(\mu+\epsilon)\mathrm{Tr}(L)-\mathrm{Tr}(BC),
\end{align*}
and 
\begin{multline*}
	2a_{(N-2)} 
	= (\mu+\epsilon)^2(\mathrm{Tr}(L)^2- \mathrm{Tr}(L^2)) \\
	+ 2(\mu+\epsilon)(\mathrm{Tr}(L)\mathrm{Tr}(BC)-\mathrm{Tr}(LBC)) \\+ \mathrm{Tr}(BC)^2 - \mathrm{Tr}((BC)^2),
\end{multline*}
where we have used well-known relations for the trace. Knowing the eigenvalues of $\bar L$, we can introduce a Hermitian transformation $Q$ such that $Q^H \bar L Q$ is upper triangular, see Schur form of a matrix in Ref.~\onlinecite{LIE15} for a proof of the existence of $Q$. With this, the polynomial equation
\begin{align*}
	\det\begin{pmatrix}
		(\mu+\epsilon)\mu\mathbb{I}_N - Q^H \bar L(\mu) Q
	\end{pmatrix} = 0
\end{align*}
possesses $N-2$ solutions $\mu=0$ and correspondingly $N-2$ solutions $\mu=-\epsilon$. The four other solutions are given by the two quadratic equations $\mu^2+\epsilon\mu-\lambda_{1,2}(\mu)=0$ where $\lambda_{1,2}$ solve $\lambda^2+a_{(N-1)}\lambda+a_{(N-2)}=0$ with $a_{(N-1)}$ and $a_{(N-2)}$ as above. The quartic form in~\eqref{eq:POAdaptive_SplayEigenQuartic} follows directly by elementary algebraic transformations.

%
\section*{References}

\begin{thebibliography}{100}
	\expandafter\ifx\csname url\endcsname\relax
	\def\url#1{{\tt #1}}\fi
	\expandafter\ifx\csname urlprefix\endcsname\relax\def\urlprefix{URL }\fi
	
	\bibitem{ACE05}
	J.~A. Acebr{\'o}n, L.~L. Bonilla, C.~J. P\'{e}rez~Vicente, F.~Ritort, and
	R.~Spigler: {\em The {K}uramoto model: A simple paradigm for synchronization
		phenomena\/}, Rev. Mod. Phys. {\bf 77}, 137 (2005).
	
	\bibitem{PIK01}
	A.~Pikovsky, M.~Rosenblum, and J.~Kurths: {\em Synchronization: a universal
		concept in nonlinear sciences\/} (Cambridge University Press, Cambridge,
	2001), 1st ed.
	
	\bibitem{WIN80}
	A.~T. Winfree: {\em The Geometry of Biological Time\/} (Springer, New York,
	1980).
	
	\bibitem{HOP97}
	F.~C. Hoppensteadt and E.~M. Izhikevich: {\em Weakly connected neural
		networks\/} (Springer, New York, 1997).
	
	\bibitem{PIE19a}
	B.~Pietras and A.~Daffertshofer: {\em Network dynamics of coupled oscillators
		and phase reduction techniques\/}, Phys. Rep. {\bf 819}, 1 (2019).
	
	\bibitem{ASH16}
	P.~Ashwin, S.~Coombes, and R.~Nicks: {\em Mathematical frameworks for
		oscillatory network dynamics in neuroscience\/}, J. Math. Neurosci. {\bf
		6:2}, 2 (2016).
	
	\bibitem{KLI17a}
	V.~Klinshov, S.~Yanchuk, A.~Stephan, and V.~I. Nekorkin: {\em Phase response
		function for oscillators with strong forcing or coupling\/}, Europhys. Lett.
	{\bf 118}, 50006 (2017).
	
	\bibitem{ROS19a}
	M.~Rosenblum and A.~Pikovsky: {\em Numerical phase reduction beyond the first
		order approximation\/}, Chaos {\bf 29}, 011105 (2019).
	
	\bibitem{ERM19}
	G.~B. Ermentrout, Y.~Park, and D.~Wilson: {\em Recent advances in coupled
		oscillator theory\/}, Philos. Trans. Royal Soc. A {\bf 377}, 20190092 (2019).
	
	\bibitem{KUR84}
	Y.~Kuramoto: {\em Chemical Oscillations, Waves and Turbulence\/}
	(Springer-Verlag, Berlin, 1984).
	
	\bibitem{STR00}
	S.~H. Strogatz: {\em From {K}uramoto to {C}rawford: exploring the onset of
		synchronization in populations of coupled oscillators\/}, Physica D {\bf
		143}, 1 (2000).
	
	\bibitem{STR93}
	S.~H. Strogatz and I.~Stewart: {\em Coupled oscillators and biological
		synchronization\/}, Sci. Am. {\bf 269}, 102 (1993).
	
	\bibitem{STR03}
	S.~H. Strogatz: {\em Sync: {H}ow order emerges from chaos in the universe,
		nature, and daily life\/} (Hyperion, New York, 2003).
	
	\bibitem{ROD16}
	F.~A. Rodrigues, T.~K. D.~M. Peron, P.~Ji, and J.~Kurths: {\em The {Kuramoto}
		model in complex networks\/}, Phys. Rep. {\bf 610}, 1 (2016).
	
	\bibitem{BRE10h}
	M.~Breakspear, S.~Heitmann, and A.~Daffertshofer: {\em Generative models of
		cortical oscillations: neurobiological implications of the {Kuramoto}
		model\/}, Front. Hum. Neurosci. {\bf 4}, 190 (2010).
	
	\bibitem{LUE16}
	L.~L\"ucken, O.~V. Popovych, P.~A. Tass, and S.~Yanchuk: {\em {N}oise-enhanced
		coupling between two oscillators with long-term plasticity\/}, Phys. Rev. E
	{\bf 93}, 032210 (2016).
	
	\bibitem{ASL18a}
	M.~Madadi~Asl, A.~Valizadeh, and P.~A. Tass: {\em Dendritic and axonal
		propagation delays may shape neuronal networks with plastic synapses\/},
	Front. Physiol. {\bf 9}, 1849 (2018).
	
	\bibitem{ROE19a}
	V.~R{\"o}hr, R.~Berner, E.~L. Lameu, O.~V. Popovych, and S.~Yanchuk: {\em
		Frequency cluster formation and slow oscillations in neural populations with
		plasticity\/}, PLoS ONE {\bf 14}, e0225094 (2019).
	
	\bibitem{BER21b}
	R.~Berner, S.~Vock, E.~Sch{\"o}ll, and S.~Yanchuk: {\em Desynchronization
		transitions in adaptive networks\/}, Phys. Rev. Lett. {\bf 126}, 028301
	(2021).
	
	\bibitem{BIC20}
	C.~Bick, M.~Goodfellow, C.~R. Laing, and E.~A. Martens: {\em Understanding the
		dynamics of biological and neural oscillator networks through exact
		mean-field reductions: a review\/}, J. Math. Neurosci. {\bf 10}, 9 (2020).
	
	\bibitem{FIL08a}
	G.~Filatrella, A.~H. Nielsen, and N.~F. Pedersen: {\em Analysis of a power grid
		using a {K}uramoto-like model\/}, Eur. Phys. J. B {\bf 61}, 485 (2008).
	
	\bibitem{TUM19}
	L.~Tumash, S.~Olmi, and E.~Sch{\"o}ll: {\em {S}tability and control of power
		grids with diluted network topology\/}, Chaos {\bf 29}, 123105 (2019).
	
	\bibitem{TAH19}
	H.~Taher, S.~Olmi, and E.~Sch{\"o}ll: {\em Enhancing power grid synchronization
		and stability through time delayed feedback control\/}, Phys. Rev. E {\bf
		100}, 062306 (2019).
	
	\bibitem{HEL20}
	F.~Hellmann, P.~Schultz, P.~Jaros, R.~Levchenko, T.~Kapitaniak, J.~Kurths, and
	Y.~Maistrenko: {\em Network-induced multistability through lossy coupling and
		exotic solitary states\/}, Nat. Commun. {\bf 11}, 592 (2020).
	
	\bibitem{TOT20}
	C.~H. Totz, S.~Olmi, and E.~Sch{\"o}ll: {\em Control of synchronization in
		two-layer power grids\/}, Phys. Rev. E {\bf 102}, 022311 (2020).
	
	\bibitem{BER21a}
	R.~Berner, S.~Yanchuk, and E.~Sch{\"o}ll: {\em What adaptive neuronal networks
		teach us about power grids\/}, Phys. Rev. E {\bf 103}, 042315 (2021).
	
	\bibitem{MAI14a}
	Y.~Maistrenko, B.~Penkovsky, and M.~Rosenblum: {\em Solitary state at the edge
		of synchrony in ensembles with attractive and repulsive interactions\/},
	Phys. Rev. E {\bf 89}, 060901 (2014).
	
	\bibitem{OME19c}
	O.~E. Omel'chenko and E.~Knobloch: {\em Chimerapedia: coherence--incoherence
		patterns in one, two and three dimensions\/}, New J. Phys. {\bf 21}, 093034
	(2019).
	
	\bibitem{TEI19}
	E.~Teichmann and M.~Rosenblum: {\em Solitary states and partial synchrony in
		oscillatory ensembles with attractive and repulsive interactions\/}, Chaos
	{\bf 29}, 093124 (2019).
	
	\bibitem{WAT93a}
	S.~Watanabe and S.~H. Strogatz: {\em Integrability of a globally coupled
		oscillator array\/}, Phys. Rev. Lett. {\bf 70}, 2391 (1993).
	
	\bibitem{WAT94}
	S.~Watanabe and S.~H. Strogatz: {\em {Constants of motion for superconducting
			Josephson arrays}\/}, Physica D {\bf 74}, 197 (1994).
	
	\bibitem{MAR09c}
	S.~A. Marvel, R.~E. Mirollo, and S.~H. Strogatz: {\em Identical phase
		oscillators with global sinusoidal coupling evolve by m\"{o}bius group
		action\/}, Chaos {\bf 19}, 043104 (2009).
	
	\bibitem{STE11b}
	I.~Stewart: {\em {P}hase oscillators with sinusoidal coupling interpreted in
		terms of projective geometry\/}, Int. J. Bifurc. Chaos {\bf 21}, 1795 (2011).
	
	\bibitem{STR91}
	S.~H. Strogatz and R.~E. Mirollo: {\em Stability of incoherence in a population
		of coupled oscillators\/}, J. Stat. Phys. {\bf 63}, 613 (1991).
	
	\bibitem{OTT08}
	E.~Ott and T.~M. Antonsen: {\em Low dimensional behavior of large systems of
		globally coupled oscillators\/}, Chaos {\bf 18}, 037113 (2008).
	
	\bibitem{OME08}
	O.~E. Omel'chenko, Y.~Maistrenko, and P.~A. Tass: {\em Chimera states: The
		natural link between coherence and incoherence\/}, Phys. Rev. Lett. {\bf
		100}, 044105 (2008).
	
	\bibitem{ABR08}
	D.~M. Abrams, R.~E. Mirollo, S.~H. Strogatz, and D.~A. Wiley: {\em Solvable
		model for chimera states of coupled oscillators\/}, Phys. Rev. Lett. {\bf
		101}, 084103 (2008).
	
	\bibitem{OME13}
	I.~Omelchenko, O.~E. Omel'chenko, P.~H{\"o}vel, and E.~Sch{\"o}ll: {\em When
		nonlocal coupling between oscillators becomes stronger: patched synchrony or
		multichimera states\/}, Phys. Rev. Lett. {\bf 110}, 224101 (2013).
	
	\bibitem{OME18a}
	O.~E. Omel'chenko: {\em The mathematics behind chimera states\/}, Nonlinearity
	{\bf 31}, R121 (2018).
	
	\bibitem{PIK15}
	A.~Pikovsky and M.~Rosenblum: {\em Dynamics of globally coupled oscillators:
		Progress and perspectives\/}, Chaos {\bf 25}, 097616 (2015).
	
	\bibitem{GOT15}
	G.~A. Gottwald: {\em Model reduction for networks of coupled oscillators\/},
	Chaos {\bf 25}, 053111 (2015).
	
	\bibitem{GON19a}
	C.~C. Gong and A.~Pikovsky: {\em Low-dimensional dynamics for higher-order
		harmonic, globally coupled phase-oscillator ensembles\/}, Phys. Rev. E {\bf
		100}, 062210 (2019).
	
	\bibitem{SMI20a}
	D.~L. Smith and G.~A. Gottwald: {\em Model reduction for the collective
		dynamics of globally coupled oscillators: {F}rom finite networks to the
		thermodynamic limit\/}, Chaos {\bf 30}, 093107 (2020).
	
	\bibitem{PIK08}
	A.~Pikovsky and M.~Rosenblum: {\em Partially integrable dynamics of
		hierarchical populations of coupled oscillators\/}, Phys. Rev. Lett. {\bf
		101}, 264103 (2008).
	
	\bibitem{MON15}
	E.~Montbri{\'o}, D.~Paz{\'o}, and A.~Roxin: {\em Macroscopic description for
		networks of spiking neurons\/}, Phys. Rev. X {\bf 5}, 021028 (2015).
	
	\bibitem{TYU18}
	I.~V. Tyulkina, D.~S. Goldobin, L.~S. Klimenko, and A.~Pikovsky: {\em Dynamics
		of noisy oscillator populations beyond the ott-antonsen ansatz\/}, Phys. Rev.
	Lett. {\bf 120}, 264101 (2018).
	
	\bibitem{GOL21}
	D.~S. Goldobin, M.~di~Volo, and A.~Torcini: {\em A reduction methodology for
		fluctuation driven population dynamics\/}, arXiv:2101.11679  (2021).
	
	\bibitem{RON21}
	R.~Ronge and M.~A. Zaks: {\em Emergence and stability of periodic two-cluster
		states for ensembles of excitable units\/}, Phys. Rev. E {\bf 103}, 012206
	(2021).
	
	\bibitem{WIL06}
	D.~A. Wiley, S.~H. Strogatz, and M.~Girvan: {\em The size of the sync basin\/},
	Chaos {\bf 16}, 015103 (2006).
	
	\bibitem{GIR12}
	T.~Girnyk, M.~Hasler, and Y.~Maistrenko: {\em Multistability of twisted states
		in non-locally coupled {Kuramoto-type} models\/}, Chaos {\bf 22}, 013114
	(2012).
	
	\bibitem{BUR18}
	O.~Burylko, A.~Mielke, M.~Wolfrum, and S.~Yanchuk: {\em Coexistence of
		hamiltonian-like and dissipative dynamics in rings of coupled phase
		oscillators with skew-symmetric coupling\/}, SIAM J. Appl. Dyn. Syst. {\bf
		17}, 2076 (2018).
	
	\bibitem{BER20c}
	R.~Berner, A.~Polanska, E.~Sch{\"o}ll, and S.~Yanchuk: {\em Solitary states in
		adaptive nonlocal oscillator networks\/}, Eur. Phys. J. Spec. Top. {\bf 229},
	2183 (2020).
	
	
	\bibitem{CHO09}
	C.~U. Choe, T.~Dahms, P.~H{\"o}vel, and E.~Sch{\"o}ll: {\em Controlling
		synchrony by delay coupling in networks: from in-phase to splay and cluster
		states\/}, Phys. Rev. E {\bf 81}, 025205(R) (2010).
	
	\bibitem{ZOU09b}
	W.~Zou and M.~Zhan: {\em Splay states in a ring of coupled oscillators: From
		local to global coupling\/}, SIAM J. Appl. Dyn. Syst. {\bf 8}, 1324 (2009).
	
	\bibitem{PER10c}
	P.~Perlikowski, S.~Yanchuk, O.~V. Popovych, and P.~A. Tass: {\em Periodic
		patterns in a ring of delay-coupled oscillators\/}, Phys. Rev. E {\bf 82},
	036208 (2010).
	
	\bibitem{CAL09a}
	M.~Calamai, A.~Politi, and A.~Torcini: {\em Stability of splay states in
		globally coupled rotators\/}, Phys. Rev. E {\bf 80}, 036209 (2009).
	
	\bibitem{OLM14}
	S.~Olmi, A.~Torcini, and A.~Politi: {\em Linear stability in networks of
		pulse-coupled neurons\/}, Front. Comput. Neurosci.  (2014).
	
	\bibitem{DIP12}
	M.~Dipoppa, M.~Krupa, A.~Torcini, and B.~S. Gutkin: {\em Splay states in finite
		pulse-coupled networks of excitable neurons\/}, SIAM J. Appl. Dyn. Syst. {\bf
		11}, 864 (2012).
	
	\bibitem{GOM07}
	J.~G\'omez-Garde\~nes, Y.~Moreno, and A.~Arenas: {\em Paths to synchronization
		on complex networks\/}, Phys. Rev. Lett. {\bf 98}, 034101 (2007).
	
	\bibitem{DOE14}
	F.~D{\"o}rfler and F.~Bullo: {\em Synchronization in complex networks of phase
		oscillators: A survey\/}, Automatica {\bf 50}, 1539 (2014).
	
	\bibitem{PAZ05a}
	D.~Paz{\'o}: {\em Thermodynamic limit of the first-order phase transition in
		the kuramoto model\/}, Phys. Rev. E {\bf 72}, 046211 (2005).
	
	\bibitem{GOM11a}
	J.~G\'omez-Garde\~nes, S.~G\'omez, A.~Arenas, and Y.~Moreno: {\em Explosive
		synchronization transitions in scale-free networks\/}, Phys. Rev. Lett. {\bf
		106}, 128701 (2011).
	
	\bibitem{BOC16}
	S.~Boccaletti, J.~A. Almendral, S.~Guan, I.~Leyva, Z.~Liu,
	I.~Sendi{\~n}a-Nadal, Z.~Wang, and Y.~Zou: {\em Explosive transitions in
		complex networks' structure and dynamics: Percolation and synchronization\/},
	Phys. Rep. {\bf 660} (2016).
	
	\bibitem{NIC13}
	V.~Nicosia, M.~Valencia, M.~Chavez, A.~D{\'i}az-Guilera, and V.~Latora: {\em
		Remote synchronization reveals network symmetries and functional modules\/},
	Phys. Rev. Lett. {\bf 110}, 174102 (2013).
	
	\bibitem{ERM91}
	G.~B. Ermentrout: {\em {An adaptive model for synchrony in the firefly
			pteroptyx malaccae}\/}, J. Math. Biol. {\bf 29}, 571 (1991).
	
	\bibitem{OLM14a}
	S.~Olmi, A.~Navas, S.~Boccaletti, and A.~Torcini: {\em Hysteretic transitions
		in the {Kuramoto} model with inertia\/}, Phys. Rev. E {\bf 90}, 042905
	(2014).
	
	\bibitem{JAR15}
	P.~Jaros, Y.~Maistrenko, and T.~Kapitaniak: {\em {C}himera states on the route
		from coherence to rotating waves\/}, Phys. Rev. E {\bf 91}, 022907 (2015).
	
	\bibitem{OLM15a}
	S.~Olmi: {\em Chimera states in coupled {Kuramoto} oscillators with inertia\/},
	Chaos {\bf 25}, 123125 (2015).
	
	\bibitem{BEL16a}
	I.~V. Belykh, B.~N. Brister, and V.~N. Belykh: {\em Bistability of patterns of
		synchrony in {Kuramoto} oscillators with inertia\/}, Chaos {\bf 26}, 094822
	(2016).
	
	\bibitem{MAI17}
	Y.~Maistrenko, S.~Brezetsky, P.~Jaros, R.~Levchenko, and T.~Kapitaniak: {\em
		Smallest chimera states\/}, Phys. Rev. E {\bf 95}, 010203(R) (2017).
	
	\bibitem{JAR18}
	P.~Jaros, S.~Brezetsky, R.~Levchenko, D.~Dudkowski, T.~Kapitaniak, and
	Y.~Maistrenko: {\em Solitary states for coupled oscillators with inertia\/},
	Chaos {\bf 28}, 011103 (2018).
	
	\bibitem{KRU20a}
	N.~Kruk, Y.~Maistrenko, and H.~Koeppl: {\em Solitary states in the mean-field
		limit\/}, Chaos {\bf 30}, 111104 (2020).
	
	\bibitem{BRE21}
	S.~Brezetsky, P.~Jaros, R.~Levchenko, T.~Kapitaniak, and Y.~Maistrenko: {\em
		Chimera complexity\/}, Phys. Rev. E {\bf 103}, L050204 (2021).
	
	\bibitem{TAY10}
	D.~Taylor, E.~Ott, and J.~G. Restrepo: {\em Spontaneous synchronization of
		coupled oscillator systems with frequency adaptation\/}, Phys. Rev. E {\bf
		81}, 046214 (2010).
	
	\bibitem{YEU99a}
	M.~K.~S. Yeung and S.~H. Strogatz: {\em Time delay in the kuramoto model of
		coupled oscillators\/}, Phys. Rev. Lett. {\bf 82}, 648 (1999).
	
	\bibitem{PET12}
	S.~Petkoski and A.~Stefanovska: {\em Kuramoto model with time-varying
		parameters\/}, Phys. Rev. E {\bf 86}, 046212 (2012).
	
	\bibitem{SEL02}
	P.~Seliger, S.~C. Young, and L.~S. Tsimring: {\em Plasticity and learning in a
		network of coupled phase oscillators\/}, Phys. Rev. E {\bf 65}, 041906
	(2002).
	
	\bibitem{MAI07}
	Y.~Maistrenko, B.~Lysyansky, C.~Hauptmann, O.~Burylko, and P.~A. Tass: {\em
		Multistability in the kuramoto model with synaptic plasticity\/}, Phys. Rev.
	E {\bf 75}, 066207 (2007).
	
	\bibitem{AOK12}
	T.~Aoki and T.~Aoyagi: {\em Scale-free structures emerging from co-evolution of
		a network and the distribution of a diffusive resource on it\/}, Phys. Rev.
	Lett. {\bf 109}, 208702 (2012).
	
	\bibitem{KAS17}
	D.~V. Kasatkin, S.~Yanchuk, E.~Sch{\"o}ll, and V.~I. Nekorkin: {\em
		{S}elf-organized emergence of multi-layer structure and chimera states in
		dynamical networks with adaptive couplings\/}, Phys. Rev. E {\bf 96}, 062211
	(2017).
	
	\bibitem{BAC18b}
	I.~Bacic, S.~Yanchuk, M.~Wolfrum, and I.~Franovi{\'c}: {\em Noise-induced
		switching in two adaptively coupled excitable systems\/}, Eur. Phys. J. Spec.
	Top. {\bf 227}, 1077 (2018).
	
	\bibitem{BER19}
	R.~Berner, E.~Sch{\"o}ll, and S.~Yanchuk: {\em Multiclusters in networks of
		adaptively coupled phase oscillators\/}, SIAM J. Appl. Dyn. Syst. {\bf 18},
	2227 (2019).
	
	\bibitem{FRA20}
	I.~Franovi{\'c}, S.~Yanchuk, S.~Eydam, I.~Bacic, and M.~Wolfrum: {\em Dynamics
		of a stochastic excitable system with slowly adapting feedback\/}, Chaos {\bf
		30}, 083109 (2020).
	
	\bibitem{BER20}
	R.~Berner, J.~Sawicki, and E.~Sch{\"o}ll: {\em Birth and stabilization of phase
		clusters by multiplexing of adaptive networks\/}, Phys. Rev. Lett. {\bf 124},
	088301 (2020).
	
	\bibitem{VOC21}
	S.~Vock, R.~Berner, S.~Yanchuk, and E.~Sch{\"o}ll: {\em Effect of diluted
		connectivities on cluster synchronization of adaptively coupled oscillator
		networks\/}, Scientia Iranica D {\bf 28}, 1669 (2021).
	
	\bibitem{DAI94}
	H.~Daido: {\em Generic scaling at the onset of macroscopic mutual entrainment
		in limit-cycle oscillators with uniform all-to-all coupling\/}, Phys. Rev.
	Lett. {\bf 73}, 760 (1994).
	
	\bibitem{BER19a}
	R.~Berner, J.~Fialkowski, D.~V. Kasatkin, V.~I. Nekorkin, S.~Yanchuk, and
	E.~Sch{\"o}ll: {\em Hierarchical frequency clusters in adaptive networks of
		phase oscillators\/}, Chaos {\bf 29}, 103134 (2019).
	
	\bibitem{OME14}
	O.~E. Omel'chenko, M.~Wolfrum, and C.~R. Laing: {\em Partially coherent twisted
		states in arrays of coupled phase oscillators\/}, Chaos {\bf 24}, 023102
	(2014).
	
	\bibitem{STE03}
	I.~Stewart, M.~Golubitsky, and M.~Pivato: {\em {Symmetry groupoids and patterns
			of synchrony in coupled cell networks}\/}, SIAM J. Appl. Dyn. Syst. {\bf 2},
	609 (2003).
	
	\bibitem{ASH92b}
	P.~Ashwin and J.~W. Swift: {\em The dynamics of n weakly coupled identical
		oscillators\/}, J. Nonlinear Sci. {\bf 2}, 69 (1992).
	
	\bibitem{ASH08}
	P.~Ashwin, O.~Burylko, and Y.~Maistrenko: {\em Bifurcation to heteroclinic
		cycles and sensitivity in three and four coupled phase oscillators\/},
	Physica D {\bf 237}, 454 (2008).
	
	\bibitem{ASH16a}
	P.~Ashwin, C.~Bick, and O.~Burylko: {\em Identical phase oscillator networks:
		Bifurcations, symmetry and reversibility for generalized coupling\/}, Front.
	Appl. Math. Stat. {\bf 2} (2016).
	
	\bibitem{SAK86}
	H.~Sakaguchi and Y.~Kuramoto: {\em A soluble active rotater model showing phase
		transitions via mutual entertainment\/}, Prog. Theor. Phys {\bf 76}, 576
	(1986).
	
	\bibitem{DEL19}
	R.~Delabays: {\em Dynamical equivalence between kuramoto models with first- and
		higher-order coupling\/}, Chaos {\bf 29}, 113129 (2019).
	
	\bibitem{SKA20}
	P.~S. Skardal and A.~Arenas: {\em Higher order interactions in complex networks
		of phase oscillators promote abrupt synchronization switching\/}, Commun.
	Phys. {\bf 3}, 218 (2020).
	
	\bibitem{KRO20b}
	K.~A. Kroma-Wiley, P.~J. Mucha, and D.~S. Bassett: {\em Synchronization of
		coupled {K}uramoto {O}scillators under {R}esource {C}onstraints\/}, arXiv
	(2020), 2002.04092v2.
	
	\bibitem{HOU98a}
	S.~H. Hou: {\em Classroom note:a simple proof of the leverrier--faddeev
		characteristic polynomial algorithm\/}, SIAM Review {\bf 40}, 706 (1998).
	
	\bibitem{BOY04}
	S.~Boyd and L.~Vandenberghe: {\em Convex Optimization\/} (Cambridge University
	Press, 2004).
	
	\bibitem{MOR03a}
	F.~Mormann, T.~Kreuz, R.~G. Andrzejak, P.~David, K.~Lehnertz, and C.~E. Elger:
	{\em Epileptic seizures are preceded by a decrease in synchronization\/},
	Epilepsy Res. {\bf 53}, 173 (2003).
	
	\bibitem{AND16}
	R.~G. Andrzejak, C.~Rummel, F.~Mormann, and K.~Schindler: {\em All together
		now: Analogies between chimera state collapses and epileptic seizures\/},
	Sci. Rep. {\bf 6}, 23000 (2016).
	
	\bibitem{GER20}
	M.~Gerster, R.~Berner, J.~Sawicki, A.~Zakharova, A.~Skoch, J.~Hlinka,
	K.~Lehnertz, and E.~Sch{\"o}ll: {\em {FitzHugh-Nagumo} oscillators on complex
		networks mimic epileptic-seizure-related synchronization phenomena\/}, Chaos
	{\bf 30}, 123130 (2020).
	
	\bibitem{TAS12}
	P.~A. Tass, I.~Adamchic, H.~J. Freund, T.~von Stackelberg, and C.~Hauptmann:
	{\em Counteracting tinnitus by acoustic coordinated reset neuromodulation\/},
	Restor. Neurol. Neurosci. {\bf 30}, 137 (2012).
	
	\bibitem{TAS12a}
	P.~A. Tass and O.~V. Popovych: {\em Unlearning tinnitus-related cerebral
		synchrony with acoustic coordinated reset stimulation: theoretical concept
		and modelling\/}, Biol. Cybern. {\bf 106}, 27 (2012).
	
	\bibitem{TAS03a}
	P.~A. Tass: {\em {A} model of desynchronizing deep brain stimulation with a
		demand-controlled coordinated reset of neural subpopulations\/}, Biol.
	Cybern. {\bf 89}, 81 (2003).
	
	\bibitem{LIE15}
	J.~Liesen and V.~Mehrmann: {\em Linear Algebra\/} (Springer, Cham, 2015).
	
\end{thebibliography}

\end{document}